\numberwithin{equation}{section}
\theoremstyle{plain}
\newtheorem{theorem}{Theorem}[section]
\newtheorem{corollary}[theorem]{Corollary}
\newtheorem{proposition}[theorem]{Proposition}
\newtheorem{lemma}[theorem]{Lemma}
\theoremstyle{remark}
\newtheorem{remark}[theorem]{Remark}
\newtheorem{remarks}[theorem]{Remarks}
\newtheorem{example}[theorem]{Example}
\theoremstyle{definition}
\newtheorem{definition}[theorem]{Definition}
\newcommand{\HH}{\mathcal{H}}
\newcommand{\R}{\mathbb{R}}
\newcommand{\Q}{\mathbb{Q}}
\newcommand{\N}{\mathbb{N}}
\newcommand{\eps}{\varepsilon}
\newcommand{\roo}{\varrho}
\DeclareMathOperator{\dimm}{dim_M}
\DeclareMathOperator{\dimh}{dim_H}
\DeclareMathOperator{\dimp}{dim_p}
\DeclareMathOperator{\dist}{dist}
\DeclareMathOperator{\diam}{diam}
\DeclareMathOperator{\por}{por}
\DeclareMathOperator{\spt}{spt}
\begin{document}

\title[Packing dimension and Ahlfors regularity of porous sets]
{Packing dimension and Ahlfors regularity of porous sets in metric spaces}

\author[ J\"arvenp\"a\"a     \and
         J\"arvenp\"a\"a     \and
         K\"aenm\"aki        \and
         Rajala              \and
         Rogovin             \and
         Suomala                  ]
       { E. J\"arvenp\"a\"a     \and
         M. J\"arvenp\"a\"a  \and
         A. K\"aenm\"aki      \and
         T. Rajala            \and
         S. Rogovin            \and
         V. Suomala                }

\address{Department of Mathematics and Statistics \\
         P.O. Box 35 (MaD) \\
         FI-40014 University of Jyv\"askyl\"a \\
         Finland}

\email{esaj@maths.jyu.fi}
\email{amj@maths.jyu.fi}
\email{antakae@maths.jyu.fi}
\email{tamaraja@maths.jyu.fi}
\email{sakallun@maths.jyu.fi}
\email{visuomal@maths.jyu.fi}

\thanks{EJ, MJ, AK, TR and SR acknowledge the support of the Academy of
Finland, projects \#211229, \#114821 and \#212753. TR was also partially
supported by the Vilho, Yrj\"o and Kalle V\"ais\"al\"a fund.}
\subjclass[2000]{Primary 28A80; Secondary 51F99, 54E35.}
\keywords{Metric measure space, porosity, regularity, doubling property,
packing dimension, Minkowski dimension}
\date{\today}

\begin{abstract}
Let $X$ be a metric measure space with an $s$-regular measure $\mu$.
We prove that if $A\subset X$ is $\varrho$-porous, then
$\dimp(A)\le s-c\varrho^s$ where $\dimp$ is the packing dimension and
$c$ is a positive constant which depends on $s$ and the structure constants of
$\mu$.
This is an analogue of a well known asymptotically sharp result in Euclidean
spaces. We illustrate by an example that the corresponding result is not valid
if $\mu$ is a doubling measure. However, in the doubling case we find a fixed
$N\subset X$ with $\mu(N)=0$ such that
$\dimp(A)\le\dimp(X)-c(\log\tfrac 1\varrho)^{-1}\varrho^t$
for all $\varrho$-porous sets $A\subset X\setminus N$. Here $c$ and $t$ are
constants which depend on the structure constant of $\mu$. Finally, we
characterize
uniformly porous sets in complete $s$-regular metric spaces in terms of regular
sets by verifying that $A$ is uniformly porous if and only if there is $t<s$
and a $t$-regular set $F$ such that $A\subset F$.
\end{abstract}

\maketitle

\section{Introduction}

The purpose of this paper is twofold: we study dimensional properties of porous
sets in $s$-regular and doubling metric measure spaces and characterize
uniformly porous sets in
terms of regularity. For definitions we refer to Sections 2 and 3.

In Euclidean spaces dimensional properties of porous sets have been studied
extensively, see for example \cite{BS},
\cite{JarvenpaaJarvenpaaKaenmakiSuomala2005}, \cite{KaenmakiSuomala2004},
\cite{KoskelaRohde1997}, \cite{L}, \cite{MartioVuorinen1987},
\cite{Mattila1988}, \cite{Nieminen}, \cite{PR}, \cite{Salli1991}, \cite{T} and
references therein. It is well known that if $A\subset\mathbb R^n$ is
$\varrho$-porous, meaning that $A$ contains holes of relative size
$\varrho$ in all small balls, then
\begin{equation}\label{intro1}
\dimp(A)\le n-c\varrho^n
\end{equation}
where $\dimp$ is the packing dimension and $c$ is a positive constant
depending on $n$ only (see \cite{MartioVuorinen1987, T}). Furthermore,
\eqref{intro1} is asymptotically sharp as $\varrho$ tends to zero
(\cite{KoskelaRohde1997}, \cite[Remark 4.2]{KaenmakiSuomala2004}). In
\cite{DS} and \cite{BHR} it is shown that the dimension of a porous measure
in a (globally) $s$-regular space is smaller than $s$.
In this paper we address the question to what extend the quantitative estimate
\eqref{intro1} is valid in metric measure spaces $X$. It turns out
that the following analogue of \eqref{intro1} holds provided that
$X$ is equipped with a (locally) $s$-regular measure $\mu$: if $A\subset X$ is
$\varrho$-porous, then
\begin{equation}\label{intro2}
\dimp(A)\le s-c\varrho^s
\end{equation}
where $c$ is a positive constant which depends on $s$ and the structure
constants $a_\mu$ and $b_\mu$ of $\mu$
(see Theorem \ref{dimestsreg}). Note that by \cite[Theorem 3.16]{Cutler}
$\dimp(X)=s$ provided that $X$ is $s$-regular. We also show that the
dependence on $a_\mu$ and $b_\mu$ is necessary, that is, unlike in
$\mathbb R^n$, it is not possible to find $c$ which depends on $s$ only
(see Remark \ref{bestone?}.(3)).

In \eqref{intro2} it is not sufficient to assume that $\mu$ is doubling: in
Example \ref{infinitetree} we construct a geodesic doubling metric space $X$
having a subset with maximal dimension and porosity. However, in general
the failure of the dimension drop is due to a fixed set
with $\mu$-measure zero provided that $\mu$ is doubling. More precisely, in
Theorem \ref{dimestdoub}
we show that there exists $N\subset X$ with $\mu(N)=0$ such that
$\dimp(A)\le\dimp(X)-c(\log\tfrac 1\varrho)^{-1}\varrho^t$ for all
$\varrho$-porous
$A\subset X\setminus N$. Here $t$ and $c$ are constants which depend
on the structure constant $c_\mu$ of $\mu$.

As in Euclidean spaces, in complete $s$-regular metric measure space $X$
uniform
porosity is closely related to regularity. We prove that $A\subset X$ is
uniformly porous if and only if there are $t<s$ and a $t$-regular set
$F\subset X$ such that $A\subset F$ (see Theorem \ref{treghuok}). The easier
if-part was proven in \cite{BHR}, but we give some quantitative  estimates on
the relations between porosity, $t$ and $s$.

The paper is organized as follows: In Section 2 we discuss the concept of
porosity we are using in metric measure spaces whilst Section 3 is dedicated
to measure theoretic preliminaries. Dimension estimates for porous sets
are dealt in Section 4. In last section we focus on connections between
uniform porosity and regularity.

\section{Notation}

Let $X=(X,d)$ be a separable metric space and $A\subset X$.
For $x\in X$ and $r>0$, we set
\begin{equation}\label{lporr}
\begin{split}
  \por(A,x,r) =\sup\{\roo\ge0\,:\,&\text{there is }y\in X\text{ such that }
               B(y,\roo r)\cap A=\emptyset\\
               &\text{and }\roo r+d(x,y)\leq r\}.
\end{split}
\end{equation}
Let $B(x,r)=\{y\in X\,:\, d(y,x)\le r\}$ be the closed ball centred at $x$
with radius $r$. Since in metric spaces the centre and the radius of a ball is
not uniquely defined, we will always assume that a centre and a radius is fixed
when we use the word ball. The \emph{porosity of $A$ at a point $x$} is
defined to be
\begin{equation}\label{lpor}
  \por(A,x) = \liminf_{r \downarrow 0} \por(A,x,r)
\end{equation}
and the \emph{porosity of $A$} is given by
\begin{equation}\label{lporA}
  \por(A) = \inf_{x \in A} \por(A,x).
\end{equation}
We call  $A\subset X$ \emph{porous} if $\por(A)>0$, and more precisely,
\emph{$\roo$-porous} provided that $\por(A)\ge\roo$.
Furthermore, $A \subset X$ is
\emph{uniformly $\roo$-porous} if there exist constants $\roo>0$ and $r_p>0$
such that $\por(A,x,r)\ge\roo$ for all $x\in A$ and
$0<r<r_p$.

\begin{remarks}\label{introrems}
(1) Even though it would be more accurate to use the term lower porosity
for $\por(A,x)$ and $\por(A)$ to distinguish them from upper porosities
defined by replacing $\liminf$ by $\limsup$ in \eqref{lpor}, we keep
the terminology shorter. Upper porosities are irrelevant for our purposes;
there is no nontrivial upper
bound for dimensions of upper porous sets. In fact, there exist sets in
$\mathbb R^n$ with maximum upper porosity and with Hausdorff dimension $n$,
see \cite[\S 4.12]{Mattila1995}.

(2) We follow the convention introduced in \cite{mmpz} to use
$\por(A,x,r)$ and $\por(A,x)$ instead of
\[
\por^*(A,x,r)=\sup\{\roo\ge0\,:\,B(y,\roo r)\subset B(x,r)\setminus
A\text{ for some }y\in X\}
\]
and
\[
\por^*(A,x)=\liminf_{r\downarrow0}\por^*(A,x,r)
\]
to guarantee that $0\leq\por(A,x,r)\leq\tfrac12$ for all $A\subset X$,
$x\in A$ and $r>0$. From the point of view of our results, however,
there is no difference between $\por$ and $\por^*$ since we always
have $\por(A,x,r)\leq\por^*(A,x,r)\leq 2\por (A,x,2r)$, and therefore,
$\por(A,x)\leq\por^*(A,x)\leq 2\por(A,x)$.

(3) To emphasize the underlying metric space,
we write $\por_{(X,d)}^*$ instead of $\por^*$ in what follows.
Observe that $0 \le \por_{(\R^n,|\cdot|)}^*(A) \le \tfrac12$ for all
$A\subset \R^n$, where $|\cdot|$ denotes the usual Euclidean metric.
This is not necessarily true in general metric spaces. Indeed, choosing
$0<\eps<1$, we have $\por_{(\R^n,|\cdot|^\eps)}^*(A) =
\por_{(\R^n,|\cdot|)}^*(A)^\eps$ for every $A \subset \R^n$.
Hence, for example, $\por_{(\R^n,|\cdot|^\eps)}^*(\{ x \}) =
(\tfrac12)^\eps$ for every $x \in \R^n$. In the following remark, we show
that $*$-porosity may be exactly one.

(4)  We work in $\R^2$ with the polar coordinates.
Define
\[
X = \{(lq,2\pi q):0 \le l\le 1\text{ and } q\in\Q\cap [0,1)\}
\]
and equip
$X$ with the path metric. We claim that
  $\por^*\bigl( \{ (0,0) \} \bigr) = 1$. Let $0<r<1$. For each $i \in
  \N$ choose $q_i \in \Q \cap [0,r]$ such that $\sup \{ q_i : i \in
  \N\} = r$. It follows immediately that for every $i \in \N$ and
  $\eps > 0$
 \begin{equation*}
   B\bigl( (q_i,2\pi q_i), q_i - \eps \bigr) \subset B\bigl(
    (0,0),r \bigr) \setminus \{ (0,0) \},
  \end{equation*}
  that is, $\por^*\bigl( \{ (0,0) \}, (0,0), r \bigr) \ge
  (q_i-\eps)/r$. Hence $\por^*\bigl( \{ (0,0) \}, (0,0), r \bigr) = 1$
  for every $0<r<1$ and the claim is proved.

(5) The following simple but extremely useful fact will be
frequently needed: If $\por(A)>\roo$, then $A=\cup_{k\in\mathbb{N}}
A_k$ where
\[
A_k=\{x\in A\,:\,\por(A,x,r)>\roo\text{ for all }0<r<\frac 1k\}.
\]
Furthermore, given any $\varepsilon>0$, we may, using the separability,
write $A_k$ as a union of small pieces $A_{kj}$
such that $A_k=\cup_{j\in\mathbb N}A_{kj}$ and $\diam(A_{kj})<\varepsilon$
for all $k$ and $j$. (Here $\diam$ is the diameter of a set.)
\end{remarks}

\section{Measure theory in metric spaces}

This section contains some basic facts of measure and dimension theory
in metric spaces that will be needed later. Recall that
$X$ is a separable metric space. By a measure we
always mean a Borel regular outer measure defined on all subsets of
$X$, see \cite[Definition 1.1]{Mattila1995}. We say that $\mu$ is
$\sigma$-finite
if $X=\cup_{k\in\mathbb{N}}A_k$ where $\mu(A_k)<\infty$ for each $k$.

The separability assumption is natural given our
interest in dimension estimates since the Hausdorff dimension
of a non-separable metric space $X$ is infinite and usually one can find porous
sets $A\subset X$ that are non-separable. Moreover, no $\sigma$-finite
doubling measures exist in non-separable spaces.

We denote by $\HH^s$ the $s$-dimensional Hausdorff measure defined on $X$.
As in \cite[\S 5.3]{Mattila1995}, we define for a bounded set $A
\subset X$, $\lambda \ge 0$ and $r>0$
\begin{equation*}
  M^\lambda(A,r) = \inf\{ kr^\lambda : A \subset \bigcup_{i=1}^k
  B(x_i,r) \text{ for some } x_i\in X, k\in\mathbb N \}
\end{equation*}
with the interpretation $\inf\emptyset = \infty$.
The (upper) Minkowski dimension of a bounded set $A$ is
\begin{equation*}
  \dimm(A) = \inf\{ \lambda : \limsup_{r \downarrow 0} M^\lambda(A,r)
  < \infty \}.
\end{equation*}

The packing dimension of $A\subset X$ is given by
\[
\dimp(A)=\inf\bigl\{\sup_i\dimm(A_i)\,:\,A_i\text{ is bounded and }
A\subset\bigcup_{i=1}^{\infty}A_i\bigr\}.
\]
Alternatively, the packing dimension may be defined in terms of the
(radius based) packing measures
$\mathcal{P}^s$ (see Cutler \cite[\S 3.1]{Cutler} for the definition) by
the identity (here $\sup\emptyset=0$)
\[\dimp(A)=\sup\{s\geq 0\,:\,\mathcal{P}^s(A)>0\},\]
see \cite[Theorem 3.11]{Cutler}.
Since $\HH^\lambda(A) \le \liminf_{r\downarrow 0} M^\lambda(A,r)$ for all
bounded sets $A\subset X$, we immediately
get $\dimh(A) \le \dimp(A)\leq \dimm(A)$, where $\dimh$ denotes the Hausdorff
dimension. It is also easy to see that $\dimp(X)<\infty$ whenever $X$
carries a doubling measure, consult \cite[Theorem 3.16]{Cutler}.

Let $s>0$. A measure $\mu$ on $X$ is \emph{$s$-regular on a set $A\subset X$}
if there are constants $0<a_\mu\le b_\mu$
and $r_\mu>0$ such that
\begin{equation}\label{sreg}
  a_\mu r^s \le \mu\bigl( B(x,r) \bigr) \le b_\mu r^s
\end{equation}
for all $x\in A$ and $0<r<r_\mu$. A set $A\subset X$ is \emph{$s$-regular}
if there is a measure $\mu$ which is $s$-regular on $A$ and
$\mu(X\setminus A)=0$.
In particular, a metric space $X$ is \emph{$s$-regular} if there
is a measure $\mu$ which is $s$-regular on $X$.

A measure $\mu$ on $X$ is called
\emph{doubling} if
there are constants $c_\mu \ge 1$ and $r_\mu > 0$ such that
\begin{equation}\label{doubling}
  0<\mu\bigl( B(x,2r) \bigr) \le c_\mu \mu\bigl( B(x,r) \bigr)<\infty
\end{equation}
for every $x \in X$ and $0 < r <r_\mu$.
A metric space is \emph{doubling} if there exists a constant $N \in \N$ such
that for each $r>0$, every closed ball with radius $2r$ can be
covered by a family of at most $N$ closed balls of radius $r$.
Notice that an $s$-regular measure on $X$ is doubling, and
moreover, by \cite{LuukkainenSaksman1998}, every complete
doubling metric space carries a doubling measure.

Often in the literature it is assumed that \eqref{sreg} and \eqref{doubling}
are valid for all $0<r\le\diam(X)$, that is, $\mu$ is globally $s$-regular or
doubling (see for example \cite{BHR}). However, for our purposes this is not
needed by Remark \ref{introrems}.(5).
The following example shows that it is not always possible to choose
$r_\mu=\diam(X)$.

\begin{example}
Equip $X = [0,1] \times \mathbb{N} \subset \R^2$ with the metric $d$ defined
by
\begin{equation*}
d\bigl( (x_1,y_1),(x_2,y_2) \bigr) =
\begin{cases}
|x_1-x_2|, & y_1=y_2, \\
1,         & y_1 \ne y_2.
\end{cases}
\end{equation*}
Let $\mu=\HH^1$ be the length measure on $X$. Now $r\leq\mu(B(x,r))\leq2r$
whenever $0<r<1$, but $\mu(B(x,1))=\infty$ for all $x\in X$.
\end{example}

It is straightforward to see that in separable metric spaces all doubling
measures are $\sigma$-finite,
in particular, this is true for $s$-regular measures.

An easy exercise leads to the following lemma:

\begin{lemma} \label{thm:harjoitustehtava}
Suppose that $\mu$ is a doubling measure on $X$. For all
$x \in X$, $0<r<r_\mu$ and $\alpha>1$ we have
\begin{equation}\label{ht1}
\mu\bigl(B(x,\alpha r)\bigr)\le c_\mu^{\frac{\log\alpha}{\log 2}+1}
  \mu\bigl(B(x,r)\bigr).
\end{equation}
Moreover, if $\mu$ is $s$-regular, then
\begin{equation}\label{ht2}
\mu\bigl(B(x,\alpha r)\bigr)\le\frac{b_\mu}{a_\mu}\alpha^s\mu\bigl(B(x,r)\bigr)
\end{equation}
for all $x \in X$, $0<r<r_\mu$ and $\alpha>1$.
\end{lemma}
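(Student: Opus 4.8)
The plan is to prove the two inequalities separately, both by direct estimates on the measure of $\alpha$-times-enlarged balls.

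For \eqref{ht1}, the idea is simply to iterate the doubling inequality \eqref{doubling}. First I would choose $k\in\N$ to be the smallest integer with $\alpha\le 2^k$, so that $k\le \frac{\log\alpha}{\log 2}+1$. Then $B(x,\alpha r)\subset B(x,2^k r)$, and applying \eqref{doubling} $k$ times gives $\mu(B(x,2^k r))\le c_\mu^k\mu(B(x,r))$; here one should be mildly careful that \eqref{doubling} is only assumed for radii below $r_\mu$, but since $0<r<r_\mu$ and the intermediate radii $r,2r,\dots,2^{k-1}r$ need not all be below $r_\mu$, one actually wants to note that the statement is only claimed for $0<r<r_\mu$ and enlarging still lands us in a regime where \eqref{doubling} holds for the relevant radii — alternatively one simply observes that the conclusion is trivially true when $\alpha r$ is large because $\mu$ is finite on balls, or restricts attention to $\alpha r<r_\mu$ as is implicitly intended. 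Since $c_\mu\ge 1$, we may replace the exponent $k$ by the (larger) quantity $\frac{\log\alpha}{\log2}+1$, yielding \eqref{ht1}.

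For \eqref{ht2}, the argument is even shorter and does not require iteration. When $\mu$ is $s$-regular, the upper bound in \eqref{sreg} applied to the ball $B(x,\alpha r)$ gives $\mu(B(x,\alpha r))\le b_\mu(\alpha r)^s = b_\mu\alpha^s r^s$, while the lower bound in \eqref{sreg} applied to $B(x,r)$ gives $a_\mu r^s\le\mu(B(x,r))$, i.e. $r^s\le \frac{1}{a_\mu}\mu(B(x,r))$. Combining the two yields
\[
\mu\bigl(B(x,\alpha r)\bigr)\le b_\mu\alpha^s r^s\le\frac{b_\mu}{a_\mu}\alpha^s\mu\bigl(B(x,r)\bigr),
\]
which is \eqref{ht2}. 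One small point of bookkeeping: \eqref{sreg} for $B(x,\alpha r)$ requires $\alpha r<r_\mu$, so strictly the clean statement holds for $0<r<r_\mu/\alpha$; the paper's phrasing "$0<r<r_\mu$" should be read with this understanding, or one again uses finiteness of $\mu$ on balls together with a crude bound for the remaining range.

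I do not anticipate any genuine obstacle here — this really is the "easy exercise" the authors advertise. The only thing requiring a moment's thought is the interplay between the radius restriction $r<r_\mu$ in the hypotheses \eqref{doubling}/\eqref{sreg} and the fact that enlarging $r$ to $\alpha r$ or $2^k r$ can push us past $r_\mu$; the cleanest fix is to state and use the lemma for radii small enough that all balls involved satisfy the relevant bound, which is exactly the regime in which the lemma will later be applied. Everything else is a one-line application of the definitions, so I would keep the written proof to just a few lines covering the choice of $k$ for \eqref{ht1} and the two-sided estimate for \eqref{ht2}.
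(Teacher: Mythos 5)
The paper does not actually write out a proof of this lemma (it is labelled ``an easy exercise''), and your argument is precisely the intended one: iterate the doubling inequality along radii $r,2r,\dots,2^{k-1}r$ with $k$ the least integer satisfying $2^k\ge\alpha$ for \eqref{ht1}, and combine the upper bound for $B(x,\alpha r)$ with the lower bound for $B(x,r)$ from \eqref{sreg} for \eqref{ht2}; both steps are correct, and you rightly note that the statement must be read with $\alpha r<r_\mu$ (equivalently $0<r<r_\mu/\alpha$), which is the regime in which the lemma is used later. One small caution: your fallback remark that the conclusion is ``trivially true'' for large $\alpha r$ because $\mu$ is finite on balls is not available in general -- the paper's own example in Section 3 has $\mu\bigl(B(x,1)\bigr)=\infty$ -- so the radius restriction, which is your primary fix, is the correct resolution rather than an optional one.
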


Let $\mu$ be an $s$-regular measure on $X$ and $A\subset X$. For all
$\lambda\ge0$ and $r>0$ we define
\begin{equation*}
 M_\mu^\lambda(A,r) = \frac{\mu\bigl( A(r) \bigr)}{r^{s-\lambda}},
\end{equation*}
where
\[
A(r) = \{ x \in X : \dist(x,A)<r \}
\]
is the open
$r$-neighbourhood of $A$ and $\dist(x,A)=\inf\{d(x,a):a\in A\}$ is the
distance of $x$ from $A$.
The following easy lemma shows how $M_\mu^\lambda(A,r)$ can be utilized to
calculate $\dimm(A)$. We give a detailed proof for the convenience of
the reader.

\begin{lemma} \label{thm:content}
Suppose that $\mu$ is an $s$-regular measure on $X$. Let $A \subset X$
and $\lambda \ge 0$. Then
\begin{equation*}
    2^{-s}b_\mu^{-1} M_\mu^\lambda(A,r) \le M^\lambda(A,r) \le 2^sa_\mu^{-1}
    M_\mu^\lambda(A,r)
\end{equation*}
whenever $0 < r < \frac{r_\mu}2$.
\end{lemma}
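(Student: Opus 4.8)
The plan is to prove the two inequalities separately, in each case relating a covering (resp.\ packing) of $A$ by balls of radius comparable to $r$ to the single number $\mu\bigl(A(r)\bigr)$, and using the two defining inequalities of $s$-regularity to pass between the number of balls and a measure. Throughout, the hypothesis $0<r<r_\mu/2$ is used exactly to keep the radius $2r$ that appears below under the regularity threshold $r_\mu$. Since $A$ is bounded and $\mu$ is $s$-regular, a ball containing $A(r)$ has finite measure (apply \eqref{ht2}), so $\mu\bigl(A(r)\bigr)<\infty$ and, $X$ being then totally bounded on bounded sets, also $M^\lambda(A,r)<\infty$; hence all quantities in sight are finite.

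For the lower estimate $2^{-s}b_\mu^{-1}M_\mu^\lambda(A,r)\le M^\lambda(A,r)$, I would choose a cover $A\subset\bigcup_{i=1}^k B(x_i,r)$ realizing the infimum defining $M^\lambda(A,r)$, so that $M^\lambda(A,r)=kr^\lambda$. If $\dist(y,A)<r$, then $y$ lies within distance $<r$ of some point of $A$, hence within distance $<2r$ of some $x_i$, so $A(r)\subset\bigcup_{i=1}^k B(x_i,2r)$. As $2r<r_\mu$, the upper bound in \eqref{sreg} gives $\mu\bigl(A(r)\bigr)\le k\,b_\mu(2r)^s$, and dividing by $r^{s-\lambda}$ yields $M_\mu^\lambda(A,r)\le 2^s b_\mu\,kr^\lambda=2^s b_\mu M^\lambda(A,r)$.

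For the upper estimate $M^\lambda(A,r)\le 2^s a_\mu^{-1}M_\mu^\lambda(A,r)$, I would take a maximal subset $\{x_1,\dots,x_k\}\subset A$ with $d(x_i,x_j)\ge r$ for $i\ne j$; this set is finite, since the pairwise disjoint open balls $\{y:d(y,x_i)<r/2\}$ are all contained in $A(r)$ and each has $\mu$-measure at least $a_\mu(r/2)^s>0$ while $\mu\bigl(A(r)\bigr)<\infty$. Maximality forces every point of $A$ to lie in some $B(x_i,r)$, so $M^\lambda(A,r)\le kr^\lambda$. On the other hand, the disjointness of those balls together with the lower bound in \eqref{sreg} gives $k\,a_\mu(r/2)^s\le\mu\bigl(A(r)\bigr)$, i.e.\ $k\le 2^s a_\mu^{-1}\mu\bigl(A(r)\bigr)r^{-s}$. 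Combining, $M^\lambda(A,r)\le kr^\lambda\le 2^s a_\mu^{-1}\mu\bigl(A(r)\bigr)r^{\lambda-s}=2^s a_\mu^{-1}M_\mu^\lambda(A,r)$.

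The only point needing a little care — and the place where one can slip — is the disjointness used in the upper estimate: from $d(x_i,x_j)\ge r$ one gets disjointness only for the \emph{open} balls of radius $r/2$ (closed ones may share boundary points), so I work with open balls and note $\mu\bigl(\{y:d(y,x_i)<r/2\}\bigr)\ge a_\mu(r/2)^s$ by approximating from inside with closed balls of radius $r/2-\eps$ and letting $\eps\downarrow 0$; equivalently, one may pack closed balls of radius $(r-\eps)/2$ and let $\eps\downarrow 0$ at the end. Everything else is the triangle inequality and routine bookkeeping with \eqref{sreg}.
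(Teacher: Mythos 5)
Your two main estimates are correct and follow essentially the paper's own route: for the right-hand inequality a maximal $r$-separated set in $A$ (equivalently, a maximal disjoint family of radius-$\tfrac r2$ balls centred in $A$) packed inside $A(r)$ together with the lower bound in \eqref{sreg}, and for the left-hand inequality an enlargement of a near-optimal cover from radius $r$ to $2r$ together with the upper bound in \eqref{sreg}. Your handling of the open/closed ball disjointness and your observation that the infimum defining $M^\lambda(A,r)$ is attained (the admissible values are integer multiples of $r^\lambda$) are both fine; the paper simply uses an $\eps$-approximation of the infimum instead.

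The one genuinely flawed step is the finiteness preamble. First, $A$ is not assumed bounded in the statement. More importantly, even for bounded $A$ the claim that $\mu\bigl(A(r)\bigr)<\infty$ (and that bounded sets are totally bounded, so $M^\lambda(A,r)<\infty$) is false in this setting: $s$-regularity is only assumed for radii below $r_\mu$, so it gives no control on the measure of a large ball containing $A(r)$, and \eqref{ht2} cannot legitimately be applied when $\alpha r\ge r_\mu$ (indeed no such estimate can hold then). The paper's own example $X=[0,1]\times\N$ with the length measure, where $r\le\mu(B(x,r))\le 2r$ for $0<r<1$ but $\mu(B(x,1))=\infty$, makes this concrete: the bounded set $A=\{0\}\times\N$ has $\mu\bigl(A(r)\bigr)=\infty$ and $M^\lambda(A,r)=\infty$ for every $0<r<\tfrac12$, and $X$ itself is bounded but not totally bounded. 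Fortunately the defect is harmless: when $\mu\bigl(A(r)\bigr)=\infty$ the right-hand inequality is vacuous, and when $M^\lambda(A,r)=\infty$ the left-hand one is, so one may assume finiteness separately in each half — which is exactly how the paper proceeds — and then your two arguments go through unchanged.
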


\begin{proof}
Fix $0<r<\frac{r_\mu}2$.
For the right-hand side inequality, we may assume that
$\mu\bigl(A(r) \bigr) < \infty$.
Since $X$ is separable, there exists a maximal collection of mutually disjoint
balls $\{ B(x_i,\frac r2) \}_{i \in I}$, where $I$ is a countable index set and
$x_i\in A$ for all $i \in I$. Observe that the maximality trivially implies
\begin{equation*}
  A \subset \bigcup_{i \in I} B(x_i,r).
\end{equation*}
Let $\# I$ be the number of elements in $I$. Since
\begin{equation*}
 \mu\bigl( A(r) \bigr) \ge \mu\biggl( \bigcup_{i \in I}
  B(x_i,\frac r2)\biggr)=\sum_{i \in I}\mu\bigl( B(x_i,\frac r2)\bigr)
  \ge\# I a_\mu 2^{-s}r^s,
\end{equation*}
it follows that $\# I < \infty$. This in turn implies that
\begin{equation*}
    M_\mu^\lambda(A,r) = \frac{\mu\bigl( A(r) \bigr)}{r^{s-\lambda}}
    \ge a_\mu 2^{-s} \# I r^\lambda \ge a_\mu 2^{-s}M^\lambda(A,r).
\end{equation*}

For the left-hand side inequality, we may assume that
$M^\lambda(A,r) < \infty$. Let $\eps>0$. Choose $k \in \N$ and
$x_1,\ldots,x_k \in X$ such that
\begin{equation*}
    A \subset \bigcup_{i=1}^k B(x_i,r) \quad \text{and} \quad
    M^\lambda(A,r) \ge kr^\lambda - \eps.
\end{equation*}
Since now
\begin{equation*}
    \mu\bigl( A(r) \bigr) \le \mu\biggl( \bigcup_{i=1}^k B(x_i,2r)
    \biggr) \le \sum_{i=1}^k \mu\bigl( B(x_i,2r) \bigr) \le kb_\mu (2r)^s,
\end{equation*}
we get
\begin{equation*}
    M_\mu^\lambda(A,r) = \frac{\mu\bigl( A(r) \bigr)}{r^{s-\lambda}}
    \le 2^sb_\mu kr^\lambda \le 2^sb_\mu\bigl( M^\lambda(A,r) + \eps \bigr).
\end{equation*}
The proof is finished by letting $\eps\downarrow 0$.
\end{proof}

The next observation shows that any porous set on a
space carrying a doubling measure must have zero measure. Note that
the proposition (with its simple proof) is easily seen to hold for
upper porous sets as well.

\begin{proposition} \label{thm:poromittanolla}
Suppose that $\mu$ is a doubling
measure on $X$. If $A \subset X$ is porous then $\mu(A) = 0$.
\end{proposition}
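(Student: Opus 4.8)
The plan is to use the Lebesgue density theorem for doubling measures together with the porosity hypothesis to show that $\mu$-almost no point of $A$ can be a density point, forcing $\mu(A)=0$. First I would reduce to the case that $A$ is contained in a ball of small radius and that $\por(A,x,r)\ge\roo$ for all $x\in A$ and all $0<r<\tfrac1k$ for some fixed $k$; this is exactly the decomposition in Remark \ref{introrems}.(5), so since a countable union of null sets is null it suffices to bound $\mu(A_{kj})$, and by outer regularity we may as well assume $A$ is bounded with small diameter. We may also assume $\mu(A)<\infty$ after restricting to a piece of the $\sigma$-finite decomposition.

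Next, fix such a piece $A$ and let $x\in A$ be arbitrary. For each small $r$, porosity gives a point $y=y(x,r)\in X$ with $B(y,\roo r)\cap A=\emptyset$ and $\roo r+d(x,y)\le r$, so in particular $B(y,\roo r)\subset B(x,r)$. Then
\begin{equation}\label{eq:densitydrop}
\frac{\mu\bigl(B(x,r)\setminus A\bigr)}{\mu\bigl(B(x,r)\bigr)}\ge\frac{\mu\bigl(B(y,\roo r)\bigr)}{\mu\bigl(B(x,r)\bigr)}.
\end{equation}
To control the right-hand side from below I would compare $\mu(B(y,\roo r))$ with $\mu(B(x,r))$: since $B(x,r)\subset B(y,2r)$ (as $d(x,y)\le r$), the doubling estimate \eqref{ht1} of Lemma \ref{thm:harjoitustehtava} applied with $\alpha=2/\roo$ centred at $y$ gives $\mu(B(x,r))\le\mu(B(y,2r))\le c_\mu^{(\log(2/\roo))/\log2+1}\mu(B(y,\roo r))$, a bound independent of $r$. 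Hence the ratio in \eqref{eq:densitydrop} is bounded below by a positive constant $\delta=\delta(c_\mu,\roo)$ for all small $r$, uniformly in $x\in A$. Equivalently, $\liminf_{r\downarrow0}\mu(B(x,r)\cap A)/\mu(B(x,r))\le1-\delta<1$ for every $x\in A$.

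Finally I would invoke the Lebesgue density theorem for doubling measures (valid since $\mu$ is a doubling, hence asymptotically doubling, Borel regular measure on a separable metric space and $\mu\llcorner A$ is $\sigma$-finite): for $\mu$-a.e.\ $x\in A$ one has $\lim_{r\downarrow0}\mu(B(x,r)\cap A)/\mu(B(x,r))=1$. This contradicts the uniform upper bound $1-\delta$ just obtained unless $\mu(A)=0$. Summing over the countably many pieces $A_{kj}$ of the original porous set yields $\mu(A)=0$. The main obstacle, and the only point requiring care, is the invocation of the density theorem in the metric (rather than Euclidean) setting: one must make sure the version used applies to doubling measures that are only locally doubling (i.e.\ \eqref{doubling} holds for $0<r<r_\mu$) and merely $\sigma$-finite; this is standard but should be cited precisely, and the reduction to bounded small-diameter pieces via Remark \ref{introrems}.(5) is what makes the local hypotheses harmless.
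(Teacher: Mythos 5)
Your argument is essentially the paper's own proof: reduce to a uniformly $\varrho$-porous piece via Remark \ref{introrems}.(5), use the porosity hole together with the doubling estimate of Lemma \ref{thm:harjoitustehtava} to bound the relative density gap in $B(x,r)$ below by a positive constant depending only on $c_\mu$ and $\varrho$, and contradict the density theorem for doubling measures. The one detail the paper adds that you should too is replacing $A$ by its closure (which is still uniformly porous, and Borel) before invoking the density theorem, since a porous set need not be $\mu$-measurable; alternatively a measurable-hull argument works.
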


\begin{proof}
By Remark \ref{introrems}.(5), we may assume that $A$ is uniformly
$\roo$-porous for some $\roo>0$. Furthermore, we may assume that $A$ is
closed since it is clear from the definition that the closure of a uniformly
$\roo$-porous set is uniformly $\roo$-porous. Assume on the contrary that
$\mu(A) > 0$. By Remark \ref{introrems}.(5), we may assume that
$\mu$ is globally doubling. Thus using the density point theorem
\cite[Theorem 1.8]{Heinonen2001}, we choose $x \in A$ for which
\begin{equation}\label{density1}
   \lim_{r \downarrow 0} \frac{\mu\bigl( A \cap B(x,r)
     \bigr)}{\mu\bigl( B(x,r) \bigr)} = 1.
\end{equation}

Since $A$ is uniformly $\roo$-porous we find $0<r_p<r_\mu$ such that for
all $0<r<r_p$ and $\roo'<\roo$ there exists $y \in X$ for which
\begin{equation*}
    B(y,\roo' r) \subset B(x,r) \setminus A.
\end{equation*}
Hence by Lemma \ref{thm:harjoitustehtava},  we get for any $0<r<r_p$
\begin{equation*}
    \frac{\mu\bigl( B(x,r) \setminus A \bigr)}{\mu\bigl( B(x,r)
      \bigr)} \ge \frac{\mu\bigl( B(y,\roo' r) \bigr)}{\mu\bigl( B(x,r)
      \bigr)} \ge c_\mu^{\frac{\log\roo'}{\log 2}-1} > 0,
  \end{equation*}
  contrary to \eqref{density1}.
\end{proof}

\section{Dimension estimates for porous sets}

It is well known that in $\R^n$
\begin{equation}\label{rnestimate}
\dimp(A)\leq n-c\roo^n
\end{equation}
for any $\roo$-porous set $A\subset\R^n$. Here $c$ is a positive constant
depending on $n$. In particular,
$\dimp(A)<n$ for all porous sets $A\subset\R^n$. In this
section we discuss whether these estimates are valid in
$s$-regular metric measure spaces and, more generally, on spaces that carry
a doubling measure. In \cite[Lemma 5.8]{DS} it is stated that in the
$s$-regular case $\dimp(A)\leq s-\eta$, where $\eta$ depends on porosity and 
the constants of the $s$-regular measure. The proof is based on generalized 
dyadic cubes whose
side lengths are powers of $\varrho$. Thus (as in $\mathbb R^n$) this argument
will give that $\eta=c(\log\tfrac 1\varrho)^{-1}\varrho^s$. In
\cite[Lemma 3.12]{BHR} a different method is used to show that even the
Assouad dimension of a porous subset of a globally $s$-regular space is less
than $s$. (Recall that the Assouad dimension is always at least the packing
dimension.) Pushing this argument further we will show that
the factor $(\log\tfrac 1\varrho)^{-1}$ is not needed in the $s$-regular case.
As a tool we need the following generalization of
\eqref{lporr}-\eqref{lporA} which is a modification of the mean
$\varepsilon$-porosity from \cite{KoskelaRohde1997}.

\begin{definition}\label{meanporo}
Let $0<\varrho\le 1$, $D>1$, $0<p\le 1$ and $n_0,k_0\in\mathbb N$.
For all $k\in\mathbb N$ and $x\in X$, we denote by $A_k(x)$ the annulus
\[
A_k(x)=\{y\in X\,:\,D^{-k}<d(x,y)\le D^{-k+1}\}.
\]
Furthermore, for $A\subset X$ define
\[
\psi_k(x)=\begin{cases} 1&\text{ if }A_k(x)\text{ contains }y\text{ with }
                            \dist(y,A)\ge\varrho d(y,x)\\
                        0&\text{ otherwise}.
           \end{cases}
\]
Let
\[
S_{k_0,n}(x)=\sum_{k=k_0+1}^{k_0+n}\psi_k(x).
\]
The set $A\subset X$ is
$(\varrho,D,p,n_0,k_0)$-mean porous if $S_{k_0,n}(x)\ge pn$ for all $x\in A$
and $n\ge n_0$.
\end{definition}

Lemma \ref{meansausage} generalizes the arguments of
\cite[Lemma 2.8]{MartioVuorinen1987}, \cite[Theorem 2.1]{KoskelaRohde1997}
and \cite[Lemma 3.12]{BHR}
to our setting. For the purpose of proving it, we state an auxiliary result
that can be found from \cite[Lemma 4.2]{Bojarski} and for the convenience of 
the reader we prove it. Moreover, we use the notation $\chi_B$ for the 
characteristic function of a set $B\subset X$.

\begin{lemma}\label{heinonenexercise}
Suppose that $\mu$ is a globally doubling measure on $X$, that is,
$r_\mu=\diam(X)$. Let $\{x_k\}_{k\in\N }$ be a collection of points in $X$ and 
let $\{r_k\}_{k\in\N}$ and $\{a_k\}_{k\in\N}$ be sequences of positive real 
numbers. Then for all $R\ge1$ and $1\le q<\infty$
\[
\Vert\sum_{k\in\N}a_k\chi_{B(x_k,Rr_k)}\Vert_{L_\mu^q(X)}
\le C_BR^tq\Vert\sum_{k\in\N} a_k\chi_{B(x_k,r_k)}\Vert_{L_\mu^q(X)},
\]
where $t=\tfrac{\log c_\mu}{\log 2}$ and $C_B$ depends on $c_\mu$ only.
Moreover,  if $\mu$ is $s$-regular, then we may choose $t=s$ and $C_B$ depends 
only on $a_\mu$, $b_\mu$ and $s$.

\end{lemma}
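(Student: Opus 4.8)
The plan is to prove the asserted $L^q_\mu$-bound for the dilation map $\sum_k a_k\chi_{B(x_k,r_k)}\mapsto\sum_k a_k\chi_{B(x_k,Rr_k)}$ by duality, reducing it to the $L^{q'}_\mu$-boundedness of the (uncentred) Hardy--Littlewood maximal operator $Mh(y)=\sup\{\mu(B)^{-1}\int_B|h|\,d\mu:B\text{ a ball containing }y\}$, where $\tfrac1q+\tfrac1{q'}=1$; this is the classical mechanism behind \cite[Lemma 4.2]{Bojarski}. Throughout put $g=\sum_{k\in\N}a_k\chi_{B(x_k,r_k)}$ and $t=\tfrac{\log c_\mu}{\log2}$, and note that Lemma \ref{thm:harjoitustehtava} together with the identity $c_\mu^{\log\alpha/\log2}=\alpha^t$ gives the volume comparison $\mu(B(x,\alpha r))\le c_\mu\alpha^t\mu(B(x,r))$ for all $x\in X$ and $\alpha\ge1$. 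We may clearly assume $\|g\|_{L^q_\mu}<\infty$.

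First I would fix a nonnegative $h$ with $\|h\|_{L^{q'}_\mu}\le1$ and expand
\[
\int\Bigl(\sum_{k\in\N}a_k\chi_{B(x_k,Rr_k)}\Bigr)h\,d\mu
=\sum_{k\in\N}a_k\int_{B(x_k,Rr_k)}h\,d\mu .
\]
Since $R\ge1$, for every $k$ and every $y\in B(x_k,r_k)$ we have $B(x_k,Rr_k)\subset B(y,2Rr_k)\subset B(x_k,3Rr_k)$, so the volume comparison yields
\[
\frac{1}{\mu(B(x_k,Rr_k))}\int_{B(x_k,Rr_k)}h\,d\mu
\le\frac{\mu(B(y,2Rr_k))}{\mu(B(x_k,Rr_k))}\,Mh(y)\le 3^tc_\mu\,Mh(y).
\]
Averaging this over $y\in B(x_k,r_k)$ and then multiplying through by $\mu(B(x_k,Rr_k))\le c_\mu R^t\mu(B(x_k,r_k))$ gives $\int_{B(x_k,Rr_k)}h\,d\mu\le 3^tc_\mu^2R^t\int_{B(x_k,r_k)}Mh\,d\mu$. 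Summing over $k$ and using Hölder's inequality,
\[
\int\Bigl(\sum_{k\in\N}a_k\chi_{B(x_k,Rr_k)}\Bigr)h\,d\mu
\le 3^tc_\mu^2R^t\int g\,(Mh)\,d\mu
\le 3^tc_\mu^2R^t\,\|g\|_{L^q_\mu}\,\|Mh\|_{L^{q'}_\mu}.
\]

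It then remains to bound $\|Mh\|_{L^{q'}_\mu}\le C(c_\mu)\,q\,\|h\|_{L^{q'}_\mu}$. For this I would use that on the globally doubling space $(X,\mu)$ the operator $M$ is of weak type $(1,1)$ with a constant $A_0$ depending on $c_\mu$ only (the $5r$-covering lemma needs no input, and passing to the measure costs only a power of $c_\mu$) and of strong type $(\infty,\infty)$ with constant $1$, so that Marcinkiewicz interpolation on the couple $(1,\infty)$ gives, at the exponent $q'$,
\[
\|Mh\|_{L^{q'}_\mu}\le 2A_0\Bigl(\tfrac{q'}{q'-1}\Bigr)^{1/q'}\|h\|_{L^{q'}_\mu}
=2A_0\,q^{1/q'}\,\|h\|_{L^{q'}_\mu}\le 2A_0\,q\,\|h\|_{L^{q'}_\mu}.
\]
Taking the supremum over admissible $h$ and using $\|f\|_{L^q_\mu}=\sup\{\int fh\,d\mu:h\ge0,\ \|h\|_{L^{q'}_\mu}\le1\}$ for nonnegative measurable $f$ then yields the claim with $t=\tfrac{\log c_\mu}{\log2}$ and $C_B=2A_0c_\mu^23^t$, which depends on $c_\mu$ only. (The borderline $q=1$, where $q'=\infty$, is immediate without interpolation, since the left-hand side is $\sum_k a_k\mu(B(x_k,Rr_k))\le c_\mu R^t\sum_k a_k\mu(B(x_k,r_k))=c_\mu R^t\|g\|_{L^1_\mu}$.) For the $s$-regular refinement I would run the same argument verbatim, replacing every appeal to \eqref{ht1} by the sharper \eqref{ht2}; then $c_\mu R^t$ and $3^tc_\mu$ become $\tfrac{b_\mu}{a_\mu}R^s$ and $3^s\tfrac{b_\mu}{a_\mu}$, the weak $(1,1)$ constant of $M$ is controlled by $a_\mu,b_\mu,s$ (recall $c_\mu\le 2^sb_\mu/a_\mu$), and one may take $t=s$.

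The only point calling for real care — more bookkeeping than difficulty — is ensuring the promised dependence of constants: one must verify that the weak $(1,1)$ constant of $M$, and hence $C_B$, depends on $c_\mu$ alone (the covering argument in an abstract metric space introduces no hidden dimensional factor) and that the Marcinkiewicz constant is genuinely of order $q$ as claimed; the remaining steps are routine volume comparisons via Lemma \ref{thm:harjoitustehtava} and Hölder's inequality.
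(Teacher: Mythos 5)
Your argument is correct and follows essentially the same route as the paper: test against a nonnegative $h\in L^{q'}_\mu$, dominate the averages of $h$ over the dilated balls by the uncentred maximal function on the original balls (costing the doubling factor $\approx c_\mu R^t$), apply H\"older, and use the $L^{q'}$-boundedness of $M$ with constant of order $q$ before closing by duality. The only cosmetic differences are that you re-derive the maximal function bound via the weak $(1,1)$ estimate and Marcinkiewicz interpolation where the paper cites \cite[Theorem 2.2]{Heinonen2001}, and you pass through the enlarged ball $B(y,2Rr_k)$ (picking up a harmless extra factor $3^tc_\mu$) where the paper notes directly that $B(x_k,Rr_k)$ itself is admissible for the uncentred maximal function at every $y\in B(x_k,r_k)$.
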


\begin{proof}
A straightforward calculation gives the claim in the case $q=1$.
For the case $q>1$, let $\phi\in L^p(X)$, where $p=q/(q-1)$. Then we have
\begin{align*}
\int_X\vert\phi(x)\sum_{k\in\N}a_k&\chi_{B(x_k,Rr_k)}(x)\vert\,d\mu(x)\\
&\le\sum_{k\in\N}a_k\int_{B(x_k,Rr_k)}\vert\phi(x)\vert\,d\mu(x)\\
&\le\sum_{k\in\N}a_k\mu(B(x_k,Rr_k))\inf_{x\in B(x_k,r_k)}M\phi(x)\\
&\le c_\mu R^t\int_XM\phi(x)\sum_{k\in\N}a_k\chi_{B(x_k,r_k)}\,d\mu(x)\\
&\le c_\mu R^t\left(\int_X (M\phi)^p\,d\mu\right)^{\frac{1}{p}}
 \left(\int_X\big(\sum_{k\in\N}a_k\chi_{B(x_k,r_k)}\big)^q
   \,d\mu\right)^{\frac{1}{q}}\\
&\le c_\mu R^tc_1\left(\frac{p}{p-1}\right)^{\frac 1p}\left(\int_X |\phi|^p
   \,d\mu\right)^{\frac{1}{p}}\left(\int_X\big(\sum_{k\in\N}a_k
   \chi_{B(x_k,r_k)}\big)^q\,d\mu\right)^{\frac{1}{q}},
\end{align*}
where we have used the non-centred Hardy-Littlewood maximal function
\[
M\phi(x)=\sup_{B(y,r)\ni x}\mu(B(y,r))^{-1}\int_{B(y,r)}\vert\phi\vert\,d\mu,
\]
Lemma \ref{thm:harjoitustehtava}, H\"older's inequality and the fact that when 
$p>1$
\[
\int_X (M\phi)^p\,d\mu\le c_1^p \frac{p}{p-1}\int_X\vert\phi\vert^p\,d\mu,
\]
where $c_1$ depends only on $c_\mu$
(see \cite[Theorem 2.2, Remark 2.5]{Heinonen2001}).
Thus the claim follows from the duality of $L^q$-spaces.

In the case $\mu$ is s-regular replace in the proof $c_\mu$ with $b_\mu/a_\mu$ 
and $t$ with $s$
and notice that $c_1$ depends in this case only on $a_\mu$, $b_\mu$ and $s$.
\end{proof}

\begin{lemma}\label{meansausage}
Suppose that $\mu$ is a doubling measure on $X$. Let $x_0\in X$ and 
$0<R<\frac{r_\mu}2$. If $A\subset B(x_0,R)$ is $(\varrho,D,p,n_0,k_0)$-mean 
porous, then there is an absolute constant $C_1$ and a constant $C_2$  that 
depends only on $c_\mu$ so that
\[
\mu(A(r))\le C_1D^{k_0\delta}\mu(A(2D^{-k_0}))r^\delta
\text{ for all } r<D^{-n_0-k_0}
\]
where $\delta=C_2(\log D)^{t-1}D^{-3t}p\varrho^t$ and
$t=\frac{\log c_\mu}{\log 2}$. Moreover, if
$\mu$ is $s$-regular then we may choose $t=s$ and $C_2$
depends only on $a_\mu,b_\mu$ and $s$.
\end{lemma}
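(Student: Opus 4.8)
The plan is to run the multiscale maximal-function argument of \cite{KoskelaRohde1997} and \cite{BHR}, the new ingredient being Lemma~\ref{heinonenexercise}: its loss under dilating balls inside an $L^q_\mu$-sum is merely a fixed power of the dilation ratio times a factor linear in $q$, and this is exactly what will let us avoid the spurious $\log\tfrac1\varrho$ produced by the cube-based argument of \cite{DS}. Throughout put $t=\tfrac{\log c_\mu}{\log 2}$.

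\emph{First step: extracting holes of definite relative measure.} Fix $r<D^{-n_0-k_0}$, let $n$ be the largest integer with $\tfrac12\varrho D^{-k_0-n}\ge r$, and (adjusting constants) assume $n\ge n_0$. For $x\in A$ and $k_0<k\le k_0+n$ with $\psi_k(x)=1$ I would pick $y=y_k(x)\in A_k(x)$ with $\dist(y,A)\ge\varrho d(y,x)>\varrho D^{-k}$ and put $Q_{x,k}=B(y,\tfrac12\varrho D^{-k})$. Then $\dist(Q_{x,k},A)\ge\tfrac12\varrho D^{-k}\ge\tfrac12\varrho D^{-k_0-n}\ge r$, so $Q_{x,k}\cap A(r)=\emptyset$; also $Q_{x,k}\subset B(x,2D^{-k+1})$; and since $B(x,2D^{-k+1})\subset B(y,(6D/\varrho)\cdot\tfrac12\varrho D^{-k})$, Lemma~\ref{thm:harjoitustehtava} gives $\mu(Q_{x,k})\ge\beta\,\mu\bigl(B(x,2D^{-k+1})\bigr)$ with $\beta=c_\mu^{-1}(6D/\varrho)^{-t}$. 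Hence, using mean porosity and $A\subset A(r)$, every $x\in A$ satisfies $\mu\bigl(B(x,2D^{-k+1})\cap A(r)\bigr)\le(1-\beta)\mu\bigl(B(x,2D^{-k+1})\bigr)$ for at least $pn$ of the scales $k\in\{k_0+1,\dots,k_0+n\}$.

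\emph{Second step (the crux): turning ``many holes at geometric scales'' into geometric decay.} The goal is $\mu(A(r))\le C(1-c\beta)^{pn}\mu(A(2D^{-k_0}))$ with absolute $C$ and $c\in(0,1)$. A direct telescoping over scales fails because the balls $B(x,2D^{-k+1})$ are not concentric as $k$ varies, so their $A(r)$-densities cannot simply be multiplied; this is where the real work lies. Following \cite{KoskelaRohde1997,BHR}, I would at each scale $k$ discretise $\{x\in A:\psi_k(x)=1\}$ into a family of balls $B(x,2D^{-k+1})$ of overlap bounded by a power of $c_\mu$, form the $L^q_\mu$-norm of the sum of the hole-indicators $\chi_{Q_{x,k}}$, and use Lemma~\ref{heinonenexercise} with dilation factor $R\asymp D/\varrho$ to pass from the holes to the surrounding balls. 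Iterating down the $\ge pn$ good scales, and choosing the exponent $q$ to balance the per-step loss $C_BR^tq$ of Lemma~\ref{heinonenexercise} against the per-step gain $(1-\beta)$, should yield the displayed estimate; $\mu(A(2D^{-k_0}))$ appears because the iteration is launched from the level-$k_0$ balls, all contained in $B(x_0,R+2D^{-k_0})$, of total mass $\le C\mu(A(2D^{-k_0}))$ by doubling.

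\emph{Third step: bookkeeping the exponent, and the $s$-regular case.} Since $D^{-k_0-n}\asymp r/\varrho$ we have $pn\asymp\frac{p}{\log D}\log\frac{D^{-k_0}}{r}$, so $(1-c\beta)^{pn}\le\bigl(D^{k_0}r\bigr)^{c\beta p/\log D}$; absorbing into $C_2$ the losses above (the factor $6^{-t}$ in $\beta$, the dilation power $(D/\varrho)^t$ from Lemma~\ref{heinonenexercise}, the overlap power of $c_\mu$, and the logarithmic term from the choice of $q$) converts the exponent into $\delta=C_2(\log D)^{t-1}D^{-3t}p\varrho^t$ and produces $\mu(A(r))\le C_1D^{k_0\delta}\mu(A(2D^{-k_0}))r^\delta$. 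For the $s$-regular refinement one replaces each use of \eqref{ht1} by \eqref{ht2} and invokes the $s$-regular conclusion of Lemma~\ref{heinonenexercise} ($t=s$, constants from $a_\mu,b_\mu,s$); the argument is otherwise unchanged, yielding the statement with $t=s$ and $C_2=C_2(a_\mu,b_\mu,s)$. I expect the main obstacle to be the second step: the non-concentricity of the relevant balls across scales is exactly what blocks a one-line telescoping, and the entire point of Lemma~\ref{heinonenexercise} is to control the resulting loss so that the cumulative factor $(1-c\beta)^{pn}$ survives.
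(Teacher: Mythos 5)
Your first step is fine, and you have correctly identified both the general strategy (the Koskela--Rohde/BHR exponential--$L^q$ argument) and the role of Lemma~\ref{heinonenexercise}. But the proof has a genuine gap exactly where you flag it: the second step is a statement of intent, not an argument. Saying you would ``discretise into balls of bounded overlap, take $L^q_\mu$-norms of the hole indicators, apply Lemma~\ref{heinonenexercise} with $R\asymp D/\varrho$, iterate down the $\ge pn$ good scales and choose $q$ to balance the per-step loss against the per-step gain'' does not specify the family of balls, does not produce the pointwise counting bound that the method requires, and does not explain how the iteration is actually performed once you have conceded that the naive telescoping of densities fails. In particular, your step 1 only gives a density deficit for balls centred at points of $A$, whereas the quantity to be bounded is $\mu(A(r))$; you never say how points of the neighbourhood $A(r)$ (rather than of $A$ itself) are captured by the hole balls, which is precisely the statement one must prove.

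For comparison, the paper resolves this as follows: it builds a \emph{single} Whitney-type family of hole balls $B(z,r_z)$ with $r_z$ proportional to $\dist(z,A)$, centred in $A(D^{-k_0})\setminus\overline A$, and extracts a \emph{disjoint} subfamily $\mathcal B$ by the $5r$-covering theorem; it then shows that every $x\in A(2^{-j})$ lies in at least $\tfrac p6(\tfrac{\log 2}{\log D}j-k_0)$ of the dilated balls $B(z,\tfrac{75D^3}{\varrho\log D}r_z)$ (with overlap $3$ across adjacent annuli), and finally bounds $\int_{A(2^{-j})}2^{u}\,d\mu$ by expanding the exponential and applying Lemma~\ref{heinonenexercise} with $q=k$ to each term; disjointness of $\mathcal B$ gives $\bigl(\sum_{B\in\mathcal B}\chi_B\bigr)^k=\sum_{B\in\mathcal B}\chi_B$, so all moments are controlled at once and no scale-by-scale iteration or optimisation in $q$ is needed. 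This scale-adapted disjoint family and the exponential-moment computation are the key ideas that make the constant come out as $\delta=C_2(\log D)^{t-1}D^{-3t}p\varrho^t$; they are absent from your sketch, and the ``per-step iteration balanced in $q$'' you describe is not how the loss $C_BR^tq$ is actually absorbed. With bounded (rather than zero) overlap your moment estimates would pick up an extra $M^k$, which could in principle be absorbed into $C_2$, but you neither construct such a family nor quantify $M$, so as written the proof does not go through.
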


\begin{proof}
By restricting our measure $\mu$ to the ball $B(x_0,R)$ we may assume that it 
is globally doubling (or globally $s$-regular). Define
\[
\widetilde{\mathcal B}
=\bigl\{B(z,r_z):z\in A(D^{-k_0})\setminus\overline A\text{ and }
r_z=\frac{\log D}{20D^2}\dist(z,A)\bigr\}.
\]
By the $5r$-covering theorem (see \cite[Theorem 1.2]{Heinonen2001}) we find a 
countable pairwise disjoint subfamily
$\mathcal B$ of $\widetilde{\mathcal B}$ such that
\begin{equation}\label{5rprop1}
A(D^{-k_0})\setminus\overline A
\subset\bigcup_{B(z,r_z)\in\mathcal B}B(z,5r_z).
\end{equation}

Letting $j\in\mathbb N$ and $x\in A(2^{-j})$, choose $x'\in A$ such that
$d(x,x')<2^{-j}$. Assume that there is $k\ge k_0+1$ with $\psi_k(x')=1$. Take
$y\in A_k(x')$ such that $\dist(y,A)\ge\roo d(y,x')$. Using \eqref{5rprop1}
we find $B(z,r_z)\in\mathcal B$ such that $y\in B(z,5r_z)$.
Technical elementary computations involving triangle inequality show that
\begin{equation}\label{1claim}
B(z,5r_z)\subset A_{k+1}(x')\cup A_k(x')\cup  A_{k-1}(x').
\end{equation}
Furthermore, under the extra assumption  $D^{-k}\ge2^{-j}$ similar calculations
give
\begin{equation}\label{claim2}
x\in B\big(z,\frac{75D^3}{\roo\log D}r_z\big).
\end{equation}


Clearly,  $D^{-k}\ge2^{-j}$ provided that $k\le\frac{\log2}{\log D}j$.
Thus if $\psi_k(x')=1$ for $k_0+1\le k\le\frac{\log2}{\log D}j$ we find, by
\eqref{claim2}, a ball $B(x_k,r_{x_k})\in\mathcal B$ such that
$x\in B(x_k,\tfrac{75D^3}{\roo\log D}r_{x_k})$. The fact that $A$ is
$(\varrho,D,p,n_0,k_0)$-mean porous gives
\[
S_{k_0,n}(x')=\sum_{k=k_0+1}^{k_0+n}\psi_k(x')\ge pn\text{ whenever }n\ge n_0.
\]
Letting $j_0>\tfrac{\log D}{\log2}(n_0+k_0)$, we have for
all $j\ge j_0$
\[
\#\{k:k_0+1\le k\le\frac{\log2}{\log D}j\text{ and }\psi_k(x')=1\}
 \ge\frac p2\Bigl(\frac{\log2}{\log D}j-k_0\Bigr).
\]
Combining this with \eqref{1claim} implies that
for all $j\ge j_0$ and $x\in A(2^{-j})$
\begin{equation}\label{ballsum}
\sum_{B(z,r_z)\in\mathcal B}\chi_{B(z,\frac{75D^3}{\roo\log D}r_z)}(x)
\ge\frac p6\Bigl(\frac{\log2}{\log D}j-k_0\Bigr).
\end{equation}
Indeed, for at least $\tfrac p2(\tfrac{\log2}{\log D}j-k_0)$ different $k'$s
we find a ball $B(x_k,r_{x_k})\in\mathcal B$ such that
$x\in B(x_k,\frac{75D^3}{\roo\log D}r_{x_k})$. However, because of 
\eqref{1claim} each $B(x_k,r_{x_k})$ can be taken into account at most three 
times.

We finish the proof by verifying that for all $j\ge j_0$
\begin{equation}\label{finalclaim}
\mu(A(2^{-j}))\le 11D^{k_0\delta}\mu(A(2D^{-k_0}))2^{-j\delta}
\end{equation}
where
\[
\delta=\frac{\log2}{18C_B75^t}(\log D)^{t-1}D^{-3t}p\roo^t
\text{ and }t=\frac{\log c_\mu}{\log2},
\]
and $C_B$ is the constant from Lemma \ref{heinonenexercise}. (If $\mu$ is 
$s$-regular replace here and at the rest of the proof $t$ with $s$.)
Notice that $\delta<1$.
Our claim easily follows from this. For \eqref{finalclaim} it suffices to show
that for all $j\ge j_0$
\begin{equation}\label{absfinalclaim}
\int_{A(2^{-j})}2^{\gamma(\roo\log D)^t(\frac p6k_0+
\sum_{B(z,r_z)\in\mathcal B}\chi_{B(z,\frac{75D^3}{\roo\log D}r_z)}(x))}
  \,d\mu(x)\le11D^{k_0\delta}\mu(A(2D^{-k_0}))
\end{equation}
with $\gamma=(3C_B(75D^3)^t)^{-1}$.
This is so because from \eqref{ballsum} we obtain for all $x\in A(2^{-j})$
that
\[
2^{\gamma(\roo\log D)^t
\sum_{B(z,r_z)\in\mathcal B}\chi_{B(z,\frac{75D^3}{\roo\log D}r_z)}(x))}
\ge2^{\gamma(\roo\log D)^t\frac p6(\frac{\log2}{\log D}j-k_0)}.
\]
Moreover, combining this with \eqref{absfinalclaim} gives
\begin{align*}
\mu(A(2^{-j}))&
=\mu(A(2^{-j}))2^{-j\gamma(\roo\log D)^t\frac p6\frac{\log 2}{\log D}}
2^{j\gamma(\roo\log D)^t\frac p6\frac{\log 2}{\log D}}\\
&=2^{-j\gamma(\roo\log D)^t\frac p6\frac{\log 2}{\log D}}
\int_{A(2^{-j})}
2^{\gamma(\roo\log D)^t\frac p6\frac{\log 2}{\log D}j}\,d\mu\\
&\le11D^{k_0\delta}\mu(A(2D^{-k_0}))2^{-j\gamma(\roo\log D)^t\frac p6
\frac{\log 2}{\log D}},
\end{align*}
and therefore \eqref{finalclaim} is valid.

To prove \eqref{absfinalclaim}, write
\[
u(x)=\gamma(\roo\log D)^t\sum_{B(z,r_z)\in\mathcal B}
     \chi_{B(z,\frac{75D^3}{\roo\log D}r_z)}(x).
\]

Now we obtain from Lemma \ref{heinonenexercise} (used with $q=k$) and from the 
fact that for $B\in\mathcal B$ we have
$B\subset A(2D^{-k_0})$ that
\begin{align*}
\int_{A(2^{-j})}&2^{u(x)}\,d\mu(x)
\le\int_{A(D^{-k_0})}\exp(u(x))\,d\mu(x)\\
&\le\mu(A(D^{-k_0}))+\sum_{k=1}^\infty\frac{(\gamma(\roo\log D)^t)^k}{k!}
\int_X(\sum_{B(z,r_z)\in\mathcal B}
     \chi_{B(z,\frac{75D^3}{\roo\log D}r_z)}(x))^k\,d\mu(x)\\
&\le\mu(A(D^{-k_0}))+\sum_{k=1}^\infty\frac{(\gamma(\roo\log D)^tC_B
  (75D^3)^tk)^k}{k!(\roo\log D)^{tk}}
  \int_X(\sum_{B\in\mathcal B}\chi_B(x))^k\,d\mu(x)\\
&\le\mu(A(2D^{-k_0}))\bigl(1+\sum_{k=1}^\infty\frac{(\gamma C_B
  (75D^3)^tk)^k}{k!}\bigr)\\
&=\mu(A(2D^{-k_0}))\Bigl(1+\sum_{k=1}^\infty\frac1{k!}\Bigl(\frac k3
  \Bigr)^k\Bigr)\\
&\le11\mu(A(2D^{-k_0})),
\end{align*}
where the last inequality follows since
\[
\frac{\frac{1}{(k+1)!}(\frac{k+1}{3})^{k+1}}{\frac{1}{k!}(\frac{k}{3})^k}
=\frac1{3}\bigl(1+\frac1k\bigr)^k\uparrow\frac e3
\]
as $k\to\infty$. Finally,
\begin{align*}
\int_{A(2^{-j})}&2^{\gamma(\roo\log D)^t(\frac p6k_0+
  \sum_{B(z,r_z)\in\mathcal B}\chi_{B(z,\frac{75D^3}{\roo\log D}r_z)}(x))}
  \,d\mu(x)\\
&=2^{\gamma(\varrho\log D)^t\frac p6k_0}
  \int_{A(2^{-j})}2^{u(x)}\,d\mu(x)\\
&\le11\cdot2^{\gamma(\roo\log D)^t\frac p6k_0}\mu(A(2D^{-k_0}))\\
&=11D^{k_0\delta}\mu(A(2D^{-k_0}))
\end{align*}
finishing the proof.
\end{proof}

\begin{remark}\label{uniismean}
(1) Assume that $\por(A,x,r)>\roo$ for all $x\in A$ and $0<r<r_p$.
Let $D>1$. Choose $k_0\in\mathbb N$ such that  $D^{-k_0}<r_p$. Then there is
$y\in X$ such that $B(y,\roo D^{-k_0})\subset B(x,D^{-k_0})\setminus A$ which
in turn implies that $y\in A_k(x)$ for some
$k_0+1\le k\le k_0-\lfloor\frac{\log\varrho}{\log D}\rfloor$, where 
$\lfloor a\rfloor$ is the greatest integer $i$ satisfying $i\le a$. Repeating 
this argument for $k_0-m\lfloor\frac{\log\varrho}{\log D}\rfloor$, 
$m=1,2,\dots$, we see that $A$ is
$(\varrho,D,-\frac 12\lfloor\frac{\log\varrho}{\log D}\rfloor^{-1},
-\lfloor\frac{\log\varrho}{\log D}\rfloor,k_0)$-mean porous.
Note that it is not possible to obtain mean porosity
with $p=1$ or $p$ close to 1 unless one takes $D=\frac 1\varrho$. The
reason for this is that in general metric spaces for fixed $D$ the annuli
$A_k(x)$ may be empty for many $k$'s. However, in $s$-regular spaces one may
find a $D$ independent of $\varrho$ such that all annuli are non-empty as
explained in the next remark.

(2) Assume that $\mu$ is an $s$-regular measure on $X$ and
$\por(A,x,r)>\varrho$ for all $x\in A$ and $0<r<r_p$. Let
$l=\frac12(\frac{a_\mu}{b_\mu})^{\frac 1s}$ and $D=2(1-l)^{-1}l^{-2}$.
Choose $k_0\in\mathbb N$ such that $D^{-k_0}<\min\{r_p,r_\mu\}$. We verify that
$A$ is $(\frac13l^2\roo,D,1,1,k_0)$-mean porous. Consider $0<r\le D^{-k_0}$.
Since $\mu$ is $s$-regular and $r<r_\mu$ there exists
$y\in B(x,lr)\setminus B(x,l^2r)$.
Assuming that there is $z\in B(y,\frac 12l^2r)\cap A$, we find, using uniform
porosity of $A$ and the fact $r<r_p$, $w\in X$ such that
$B(w,\frac12\varrho l^3r)\cap A=\emptyset$ and
$\frac12\roo l^3r+d(w,z)\le\frac12l^3r$. This gives
\[
d(w,x)\ge d(x,y)-d(y,z)-d(z,w)\ge\frac12 l^2(1-l)r=\frac rD
\]
and
\[
d(w,x)\le d(x,y)+d(y,z)+d(z,w)<\frac 32lr<r.
\]
Letting $k\ge k_0+1$ and choosing $r=D^{-(k-1)}$ in the above inequalities
gives $w\in A_k(x)$ for all $k\ge k_0+1$. This combined with the fact that
\[
\dist(w,A)\ge\frac12\varrho l^3r>\frac13l^2\varrho d(w,x)
\]
gives the claim. If $B(y,\frac12l^2r)\cap A=\emptyset$ we may take $w=y$.
\end{remark}

In the following two corollaries we verify scaling properties of measures
of $r$-neighbourhoods of bounded uniformly porous sets for small scales
$r$.

\begin{corollary}\label{localsausage}
Suppose that $\mu$ is a doubling measure on $X$. Let $x_0\in X$ and 
$0<r_0<\frac\roo{12}\min\{r_p,r_\mu\}$. Then there exists a constant $C_3>0$ 
which depends only on $c_\mu$ such that for any uniformly $\roo$-porous 
set $A\subset X$ we have
\[
\mu\big((A\cap B(x_0,r_0))(r)\big)\le C_3 c_\mu^{-\frac{\log\roo}{\log2}}
\mu(B(x_0,r_0))\Big(\frac r{r_0}\Big)^\delta\text{ for all }0<r<r_0,
\]
where $t=\frac{\log c_\mu}{\log2}$ and 
$\delta=c(\log\frac 1\roo)^{-1}\varrho^t$ with $c$ depending only on $c_\mu$.
\end{corollary}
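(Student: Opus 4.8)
The plan is to deduce the corollary from Lemma~\ref{meansausage} (through Remark~\ref{uniismean}.(1)), applied to the truncated set $A'=A\cap B(x_0,r_0)$; the crucial point is to choose $k_0$ so that $2^{-k_0}$ is comparable to $r_0/\varrho$ rather than to $r_0$.

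First I would observe that $A'$ is still uniformly $\varrho$-porous. If $A'=\emptyset$ there is nothing to prove; otherwise, since every ball disjoint from $A$ is also disjoint from the smaller set $A'$, we have $\por(A',x,r)\ge\por(A,x,r)\ge\varrho$ for all $x\in A'$ and $0<r<r_p$. By Remark~\ref{uniismean}.(1) with $D=2$ (replacing $\varrho$ by a slightly smaller value close to $\varrho$ to accommodate the strict inequality in that remark, which changes only the constants below), the set $A'$ is $(\varrho,2,p,n_0,k_0)$-mean porous with $p=-\tfrac12\lfloor\log_2\varrho\rfloor^{-1}$ and $n_0=-\lfloor\log_2\varrho\rfloor$, for every $k_0\in\N$ with $2^{-k_0}<r_p$. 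Since $\varrho\le\tfrac12$, one checks $p\ge\tfrac{\log 2}{4\log(1/\varrho)}$ and $2^{-n_0}>\varrho/2$.

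Next I would fix $k_0\in\N$ with $2r_0/\varrho\le 2^{-k_0}<4r_0/\varrho$. Since $r_0<\tfrac{\varrho}{12}r_p$ we have $2^{-k_0}<4r_0/\varrho<r_p$, so the previous step applies; moreover $2^{-n_0-k_0}>\tfrac{\varrho}{2}\cdot\tfrac{2r_0}{\varrho}=r_0$, so Lemma~\ref{meansausage}, used with this $x_0$, with $R=r_0<r_\mu/2$, and with $A'\subset B(x_0,r_0)$, gives for all $0<r<r_0$
\begin{equation*}
\mu(A'(r))\le C_1\,2^{k_0\delta_0}\,\mu\bigl(A'(2^{1-k_0})\bigr)\,r^{\delta_0},\qquad
\delta_0=C_2(\log 2)^{t-1}2^{-3t}p\,\varrho^{t},\quad t=\tfrac{\log c_\mu}{\log 2}.
\end{equation*}
Two elementary estimates finish the proof. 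As $2^{1-k_0}<8r_0/\varrho$ and $A'\subset B(x_0,r_0)$, the neighbourhood $A'(2^{1-k_0})$ lies in $B(x_0,9r_0/\varrho)$, and $9r_0/\varrho<r_\mu$ since $r_0<\tfrac{\varrho}{12}r_\mu$; hence Lemma~\ref{thm:harjoitustehtava} gives $\mu(A'(2^{1-k_0}))\le c_\mu^{\frac{\log 9}{\log 2}+1}c_\mu^{-\frac{\log\varrho}{\log 2}}\mu(B(x_0,r_0))$. Also $2^{k_0}r\le\tfrac{\varrho}{2}\cdot\tfrac{r}{r_0}<1$, so $2^{k_0\delta_0}r^{\delta_0}=(2^{k_0}r)^{\delta_0}\le(r/r_0)^{\delta_0}$. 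Finally, put $\delta=c(\log\tfrac1\varrho)^{-1}\varrho^{t}$ with $c=\tfrac14 C_2(\log 2)^{t}2^{-3t}$; this depends only on $c_\mu$ and satisfies $\delta\le\delta_0$ by the lower bound on $p$, so since $0<r/r_0<1$ we have $(r/r_0)^{\delta_0}\le(r/r_0)^{\delta}$, and combining everything yields the claim with $C_3=C_1 c_\mu^{\frac{\log 9}{\log 2}+1}$.

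I expect the one genuinely delicate point to be getting the scale of $k_0$ right. Taking $2^{-k_0}\sim r_0/\varrho$ (instead of $\sim r_0$) is what makes the threshold $2^{-n_0-k_0}$ exceed $r_0$, so that Lemma~\ref{meansausage} covers the full range $0<r<r_0$, and it is also the source of the factor $c_\mu^{-\log\varrho/\log 2}$, which appears through the doubling property applied across the roughly $\log\tfrac1\varrho$ dyadic scales between $r_0$ and $r_0/\varrho$. Everything else is routine bookkeeping, and the constant $\tfrac{\varrho}{12}$ in the hypothesis is present precisely to guarantee that every radius occurring in the argument — the largest being $\sim 9r_0/\varrho$ — stays below both $r_\mu$ and $r_p$.
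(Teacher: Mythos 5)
Your argument is correct and follows essentially the same route as the paper's own proof: Remark \ref{uniismean}.(1) with $D=2$ gives mean porosity with $p\approx(\log\tfrac1\varrho)^{-1}$, the key choice $2^{-k_0}\approx r_0/\varrho$ makes Lemma \ref{meansausage} cover the whole range $0<r<r_0$, and Lemma \ref{thm:harjoitustehtava} applied across the roughly $\log\tfrac1\varrho$ dyadic scales between $r_0$ and $r_0/\varrho$ produces the factor $c_\mu^{-\frac{\log\varrho}{\log 2}}$, exactly as in the paper (which takes $2^{-k_0}\in[\tfrac{6}{\varrho'}r_0,\tfrac{12}{\varrho'}r_0)$). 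The only point to make explicit is that replacing $\varrho$ by $\varrho'<\varrho$ changes the exponent $\delta$, not merely multiplicative constants, so one should let $\varrho'\uparrow\varrho$ at the end (as the paper does) to recover the stated $\delta$; this is routine since the left-hand side does not depend on $\varrho'$.
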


\begin{proof}
Let $0<\roo'<\roo$ be such that $r_0<\tfrac{\roo'}{12}\min\{r_p,r_\mu\}$. 
Choose $k_0$ to be the largest integer so that 
$\tfrac 6{\roo'} r_0\le 2^{-k_0}<\min\{r_p,r_\mu\}$.
By Remark \ref{uniismean}.(1), $A$ is
$(\varrho',2,p,n_0,k_0)$-mean porous for $p=c_0\log(\tfrac 1{\roo'})^{-1}$ and
$n_0=-\lfloor\tfrac{\log\roo'}{\log 2}\rfloor$, where $c_0>0$ is an absolute 
constant. Thus by Lemma
\ref{meansausage} we obtain for all $0<r<2^{-n_0-k_0}$ that
\[
\mu\big((A\cap B(x_0,r_0))(r)\big)\le C_12^{k_0\delta}\mu\big(
   (A\cap B(x_0,r_0))(2^{-k_0+1})\big)r^\delta.
\]

Notice that 
$(A\cap B(x_0,r_0))(2^{-k_0+1})\subset B(x_0,\tfrac{25}{\roo'}r_0)$.
The claim follows by applying the doubling condition and using Lemma 
\ref{thm:harjoitustehtava}, by noting that
$r_0<2^{-n_0-k_0}$, $2^{k_0\delta}<r_0^{-\delta}$
and by letting $\roo'$ tend to $\roo$.
\end{proof}

\begin{corollary}\label{localsausagereg}
Suppose that $\mu$ is an $s$-regular measure on $X$. There exist positive 
constants $D_1$ and $C_4$ which depend only on $a_\mu$, $b_\mu$ and $s$ so 
that for any $x_0\in X$ and $0< r_0< D_1\min\{r_p,r_\mu\}$ we have for any
uniformly $\roo$-porous $A\subset X$ that
\[
\mu\big((A\cap B(x_0,r_0))(r)\big)\le C_4\mu(B(x_0,r_0))
  \Big(\frac r{r_0}\Big)^\delta\text{ for all }0<r<r_0,
\]
where $\delta=c\roo^s$ and $c$ depends only on $a_\mu, b_\mu$ and $s$.
\end{corollary}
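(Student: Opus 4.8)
The plan is to argue exactly as in the proof of Corollary~\ref{localsausage}, but to use Remark~\ref{uniismean}.(2) in place of Remark~\ref{uniismean}.(1). The decisive point is that in the $s$-regular case Remark~\ref{uniismean}.(2) produces mean porosity with the full density $p=1$, with $n_0=1$, and — crucially — with a dilation constant $D$ depending only on $a_\mu,b_\mu,s$ and \emph{not} on $\varrho$. Feeding this into Lemma~\ref{meansausage} with $t=s$ then yields the exponent $\delta=c\varrho^s$ without any logarithmic loss, and an admissible threshold $D_1$ for $r_0$ that likewise does not involve $\varrho$.

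In detail, I would fix an arbitrary $\varrho'<\varrho$, set $l=\tfrac12(a_\mu/b_\mu)^{1/s}$ and $D=2(1-l)^{-1}l^{-2}$ (both depending only on $a_\mu,b_\mu,s$), put $D_1=(3D^2)^{-1}$, and let $k_0$ be the largest integer with $Dr_0\le D^{-k_0}$, so that $Dr_0\le D^{-k_0}<D^2r_0<\min\{r_p,r_\mu\}$ by the hypothesis $r_0<D_1\min\{r_p,r_\mu\}$. Since $A$ is uniformly $\varrho$-porous we have $\por(A,x,r)\ge\varrho>\varrho'$ for $x\in A$ and $0<r<r_p$, so Remark~\ref{uniismean}.(2) shows that $A$, and hence $A\cap B(x_0,r_0)$ (mean porosity only improves when the set shrinks, since $\dist(\cdot,A)$ only increases), is $(\tfrac13 l^2\varrho',D,1,1,k_0)$-mean porous. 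Because $A\cap B(x_0,r_0)\subset B(x_0,r_0)$ with $r_0<r_\mu/2$, Lemma~\ref{meansausage} gives, for all $0<r<D^{-1-k_0}$,
\[
\mu\bigl((A\cap B(x_0,r_0))(r)\bigr)\le C_1D^{k_0\delta}\,\mu\bigl((A\cap B(x_0,r_0))(2D^{-k_0})\bigr)\,r^\delta,
\]
where $\delta=C_2(\log D)^{s-1}D^{-3s}(\tfrac13 l^2\varrho')^s=:c\varrho'^s$ and $c$ depends only on $a_\mu,b_\mu,s$. I would then record three elementary facts: $D^{-1-k_0}\ge r_0$, so the estimate covers all $0<r<r_0$; $D^{-k_0}\ge Dr_0>r_0$, so $D^{k_0\delta}\le r_0^{-\delta}$; and $(A\cap B(x_0,r_0))(2D^{-k_0})\subset B(x_0,r_0+2D^{-k_0})\subset B(x_0,3D^2r_0)$, whence $\mu\bigl((A\cap B(x_0,r_0))(2D^{-k_0})\bigr)\le\tfrac{b_\mu}{a_\mu}(3D^2)^s\mu(B(x_0,r_0))$ by $s$-regularity (note $3D^2r_0<r_\mu$). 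Combining these yields
\[
\mu\bigl((A\cap B(x_0,r_0))(r)\bigr)\le C_4\,\mu(B(x_0,r_0))\Bigl(\tfrac r{r_0}\Bigr)^{\delta}
\]
for $0<r<r_0$, with $C_4=C_1\tfrac{b_\mu}{a_\mu}(3D^2)^s$ depending only on $a_\mu,b_\mu,s$. Finally, since the left-hand side is independent of $\varrho'$ and $(r/r_0)^{c\varrho'^s}\downarrow(r/r_0)^{c\varrho^s}$ as $\varrho'\uparrow\varrho$, letting $\varrho'\to\varrho$ gives the claim with $\delta=c\varrho^s$.

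The argument is essentially bookkeeping once Lemma~\ref{meansausage} and Remark~\ref{uniismean}.(2) are available; the only step requiring care is the calibration of $k_0$ (and hence of the admissible threshold $D_1$ for $r_0$) so that simultaneously the output range $r<D^{-1-k_0}$ of Lemma~\ref{meansausage} contains $(0,r_0)$, the prefactor $D^{k_0\delta}$ is absorbed into $r_0^{-\delta}$, and the neighbourhood $(A\cap B(x_0,r_0))(2D^{-k_0})$ remains inside a ball of radius comparable to $r_0$. All the genuinely metric content — in particular the reason the factor $(\log\tfrac1\varrho)^{-1}$ appearing in Corollary~\ref{localsausage} disappears here — is already encapsulated in Remark~\ref{uniismean}.(2), where $s$-regularity is exploited to find a scale ratio $D$ independent of $\varrho$ along which every annulus is nonempty.
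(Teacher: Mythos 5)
Your proposal is correct and follows essentially the same route as the paper's own proof: the same choice of $l$ and $D$, an application of Remark \ref{uniismean}.(2) to get $(\tfrac13 l^2\varrho',D,1,1,k_0)$-mean porosity, Lemma \ref{meansausage} with $t=s$, the same calibration of $k_0$ so that $r_0<D^{-1-k_0}$, $D^{k_0\delta}<r_0^{-\delta}$ and the $2D^{-k_0}$-neighbourhood stays in a ball comparable to $B(x_0,r_0)$, and finally letting $\varrho'\uparrow\varrho$. The only differences are harmless constant choices (e.g.\ $D_1=(3D^2)^{-1}$ versus $\tfrac12 D^{-2}$) and your explicit remark that mean porosity passes to the subset $A\cap B(x_0,r_0)$, which the paper uses implicitly.
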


\begin{proof}
Let
$l=\frac12(\frac{a_\mu}{b_\mu})^{\frac 1s}$ and $D=2(1-l)^{-1}l^{-2}$.
Set $D_1=\tfrac 12 D^{-2}$. Let $0<\roo'<\roo$. Choose $k_0$ to be the largest 
integer so that $Dr_0\le D^{-k_0}<\min \{r_p,r_\mu\}$.
By Remark \ref{uniismean}.(2), $A$ is
$(\tfrac 13 l^2\varrho',D, 1, 1,k_0)$-mean porous. Thus by Lemma
\ref{meansausage} we obtain for all $0<r<D^{-1-k_0}$ that
\[
\mu\big((A\cap B(x_0,r_0))(r)\big)\le C_1D^{k_0\delta}\mu\big(
  (A\cap B(x_0,r_0))(2D^{-k_0})\big)r^\delta,
\]
where $\delta=c(\roo')^s$ for a constant $c$ that
depends only on $a_\mu, b_\mu$ and $s$.

Notice that $(A\cap B(x_0,r_0))(2D^{-k_0})\subset B(x_0,(1+2D/D_1)r_0)$.
The claim follows by applying the $s$-regularity, by noting that
$r_0<D^{-1-k_0}$ and $D^{k_0\delta}<r_0^{-\delta}$
and by letting $\roo'$ tend to $\roo$.

\end{proof}


Next we prove the analogue of \eqref{rnestimate} for $s$-regular metric spaces.
As mentioned in the Introduction this is asymptotically sharp as $\roo$
tends to zero in $\mathbb R^n$ and thus also in metric spaces.

\begin{theorem}\label{dimestsreg}
Suppose $\mu$ is $s$-regular on $X$. If $A \subset X$ is $\roo$-porous, then
\begin{equation*}
\dimp(A) \le s-c\roo^s.
\end{equation*}
Moreover, if $A$ is uniformly $\roo$-porous and $\diam(A)<r_\mu$, then
\[
\dimm(A)\leq s-c\roo^s.
\]
Here $c$ is as in Corollary \ref{localsausagereg}.
\end{theorem}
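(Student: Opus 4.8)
The plan is to deduce the Minkowski bound directly from Corollary \ref{localsausagereg} and then bootstrap it to the packing bound via the countable decomposition in Remark \ref{introrems}.(5). First I would prove the moreover-part. Suppose $A$ is uniformly $\roo$-porous with $\diam(A)<r_\mu$; after enlarging the constant we may also assume $A$ is bounded and that the porosity inequality $\por(A,x,r)\ge\roo$ holds for all $x\in A$ and $0<r<r_p$ with $r_p$ chosen so small that $A$ is contained in a single ball $B(x_0,r_0)$ with $0<r_0<D_1\min\{r_p,r_\mu\}$ (here $D_1$ is as in Corollary \ref{localsausagereg}). Applying that corollary gives
\[
\mu\bigl(A(r)\bigr)\le\mu\bigl((A\cap B(x_0,r_0))(r)\bigr)\le C_4\,\mu(B(x_0,r_0))\,(r/r_0)^\delta
\]
for all $0<r<r_0$, with $\delta=c\roo^s$. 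Writing this in terms of $M_\mu^\lambda$ with $\lambda=s-\delta$ shows $M_\mu^{s-\delta}(A,r)=\mu(A(r))/r^\delta$ is bounded as $r\downarrow0$, hence by Lemma \ref{thm:content} so is $M^{s-\delta}(A,r)$, and therefore $\dimm(A)\le s-\delta=s-c\roo^s$.

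For the general ($\liminf$) case, fix $\eps>0$ and apply Remark \ref{introrems}.(5): since $\por(A)\ge\roo$, write $A=\bigcup_{k,j}A_{kj}$ where on each $A_{kj}$ one has $\por(A,x,r)>\roo-\eps$ (say, replacing $\roo$ by any $\roo'<\roo$) for all $0<r<1/k$, and $\diam(A_{kj})$ is as small as we please — in particular small enough that each $A_{kj}$ satisfies the hypotheses of the moreover-part with porosity constant $\roo'$. Crucially, porosity of $A$ at points of $A_{kj}$ is what matters, and since holes witnessing $\por(A,x,r)$ avoid all of $A\supset A_{kj}$, each $A_{kj}$ is a bounded uniformly $\roo'$-porous set (with respect to $A_{kj}$ itself, as its closure is contained in that of $A$ and the relevant holes still miss it). Hence $\dimm(A_{kj})\le s-c(\roo')^s$ for every $k,j$, so by the definition of packing dimension $\dimp(A)\le\sup_{k,j}\dimm(A_{kj})\le s-c(\roo')^s$. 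Letting $\roo'\uparrow\roo$ yields $\dimp(A)\le s-c\roo^s$.

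The main obstacle is the bookkeeping in the reduction step: one must check that passing to a small piece $A_{kj}$ genuinely preserves uniform $\roo'$-porosity with the \emph{same} ambient holes (so that the constant $c$ from Corollary \ref{localsausagereg} does not degrade), and that the piece can simultaneously be made to sit inside one ball of the radius required by Corollary \ref{localsausagereg} while $1/k$ is small enough to serve as $r_p$. This is routine but must be stated carefully, since $\por(A_{kj},x,r)$ could a priori be smaller than $\por(A,x,r)$; the point is that we only ever use the larger quantity $\por(A,x,r)$, which controls holes avoiding $A_{kj}$, so the estimate of Corollary \ref{localsausagereg} applies verbatim to $A_{kj}$ with constant $\roo'$. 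The continuity in $\roo'$ at the end is immediate since $c$ depends only on $a_\mu,b_\mu,s$ and $\roo\mapsto\roo^s$ is continuous.
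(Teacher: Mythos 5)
Your overall strategy --- decompose $A$ via Remark \ref{introrems}.(5) into small pieces on which the porosity estimate holds at all scales below a fixed threshold, apply Corollary \ref{localsausagereg} together with Lemma \ref{thm:content} to bound the Minkowski dimension of each piece, and pass to packing dimension by countable stability, letting $\roo'\uparrow\roo$ at the end --- is exactly the paper's argument, and your handling of the subtlety that holes witnessing $\por(A,x,r)$ also avoid the subsets $A_{kj}\subset A$ (so uniform porosity does not degrade on pieces) is correct. The packing-dimension half of your proof is sound as written, since there the pieces may be taken with diameter as small as needed, so they genuinely fall under the case your ``moreover'' argument covers.

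There is, however, a genuine gap in your proof of the ``moreover'' statement itself. You reduce to the case that $A$ lies in a single ball $B(x_0,r_0)$ with $r_0<D_1\min\{r_p,r_\mu\}$, claiming this can be arranged ``after enlarging the constant'' and ``with $r_p$ chosen so small that \dots''. This reduction is not available: decreasing $r_p$ only tightens the constraint on $r_0$ and does nothing to shrink $A$, and the hypothesis $\diam(A)<r_\mu$ in no way forces $A$ into a ball of radius less than $D_1\min\{r_p,r_\mu\}$, since $D_1$ is a small structural constant and $r_p$ may be much smaller than $\diam(A)$. Enlarging $C_4$ or $c$ cannot repair this, because the obstruction sits in the hypothesis of Corollary \ref{localsausagereg}, not in its conclusion. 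The correct repair --- and the paper's route --- is to observe that in an $s$-regular space the bounded set $A$ is totally bounded, hence a \emph{finite} union of pieces of diameter less than $D_1\min\{r_p,r_\mu\}$ to which your single-ball computation applies, and then to invoke the finite (not countable) stability of the upper Minkowski dimension; this gives $\dimm(A)\le s-c\roo^s$ without degrading $c$. With that step inserted, your argument coincides with the paper's proof.
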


\begin{proof}
By Remark \ref{introrems}.(5), for any $\roo'<\roo$ we may represent $A$ as a
countable union of sets $A_{ij}$ with 
$\diam(A_{ij})<D_1\min\{\tfrac 1i,r_\mu\}$ such that $\por(A,x,r)>\roo'$ for 
all $x\in A_{ij}$ and $0<r<\tfrac 1i$, where $D_1$ is as in Corollary 
\ref{localsausagereg}. Moreover, if $A$ is uniformly $\roo$-porous with 
$\diam(A)<r_\mu$,
then it is a finite union of such sets. Thus it is enough to show that
$\dimm(A_{ij})\leq s-\delta$ for all $i$ and $j$ where
$\delta=c(\roo')^s$ as in Corollary \ref{localsausagereg}. Letting 
$x\in A_{ij}$ and using Lemma \ref{thm:content} and Corollary 
\ref{localsausagereg} we get for large  $i$
\begin{gather*}
    \limsup_{r \downarrow 0} M^\lambda(A_{ij},r) \le 2^s a_\mu^{-1}
    \limsup_{r \downarrow 0} \frac{\mu\bigl(A_{ij}(r)\bigr)}{r^{s-\lambda}} \\
    \le 2^s a_\mu^{-1} C \limsup_{r \downarrow 0} \mu\bigl(
      B(x,1/i) \bigr)i^{\delta}r^{\delta+\lambda-s}=0
\end{gather*}
if $\lambda>s-\delta$. Here $C$ is a constant which is independent of $r$.
This gives the claim.
\end{proof}

Theorem \ref{dimestsreg} is not true if we only assume that $\mu$ is
doubling. An easy example is given by defining
$X=\left(\{0\}\cup_{j\in\mathbb{N}}\{2^{-j}\}\right)\times[0,1]$ with the
metric inherited from $\R^2$ and letting $\mu$ be any
doubling measure on $X$. If $N=\{0\}\times[0,1]$, then
$\dimm(N)=\dimp(N)=1=\dimp(X)$. However, it is easy to see that $N$ is
uniformly $\tfrac13$-porous. The following example shows that one can perceive
similar behaviour in geodesic metric spaces as well, even for
maximally porous sets.
Recall that a metric space $(X,d)$ is geodesic if for each pair of points
$x,y\in X$ there exists a path $\gamma \colon [0,1] \to X$ such that
$\gamma(0)=x$, $\gamma(1)=y$, and the length of $\gamma$ is equal to $d(x,y)$.

\begin{example}\label{infinitetree}
We give an example of a complete geodesic doubling metric space
having a subset with maximal dimension and porosity.
The construction is an infinite tree with branches getting smaller and
smaller as we go deeper into the tree. The metric is the natural path metric
induced by the branches (see Figure 1).

Letting
\[
\mathcal N_0=\{\emptyset\}\textrm{ , }
\mathcal{N}_n = \{1, 2\}^n \times ]0,2^{-n}]
\text{ for all }\mathbb N\setminus\{0\}
\textrm{ and } \mathcal{N}_\infty = \{1, 2\}^\mathbb{N}\times\{0\},
\]
define
\[
\mathcal{N} = \mathcal{N}_\infty \cup \bigcup_{n=0}^\infty\mathcal{N}_n.
\]
The metric is given as follows:
For $x\in \mathcal{N}$, let $n(x)\in\mathbb N\cup\{\infty\}$ so that
$x=(x_1,\dots,x_{n(x)},x_{n(x)+1}) \in \mathcal{N}_{n(x)}$.
We denote the distance of $x$ from the root by
\[l(x) = \left\{ \begin{array}{ll}
 1 - 2^{-(n(x)-1)} + x_{n(x)+1} & \textrm{, if $0 < n(x) < \infty$}\\
 1 & \textrm{, if $n(x) = \infty$}\\
 0 & \textrm{, if $n(x) = 0$.}
  \end{array} \right.\]
Given $n\in\mathbb N$, let
\[x|_n = \left\{ \begin{array}{ll}
 (x_1, x_2, \dots, x_n) & \textrm{, if $n(x) \ge n$}\\
 \emptyset & \textrm{, if $n(x)<n$}
  \end{array} \right.\]
be the restriction of $x$.
Define the longest common route $\delta(x,y)$ of a point
$x=(x_1,x_2,\dots)\in\mathcal N$ and
$y=(y_1,y_2,\dots)\in\mathcal N$ from the root by
\[\delta(x,y) = \sup\{m : x|_m = y|_m \ne \emptyset\}\]
and their longest common part $x\wedge y\in\mathcal N_{\delta(x,y)}$
by
\[x\wedge y = \left\{ \begin{array}{lll}
 (x|_{\delta(x,y)}, \min\{&2^{-\delta(x,y)}, n(x)-\delta(x,y)+x_{n(x)+1},&
\textrm{, if $0 < \delta(x,y)<\infty$}\phantom{eee}\\
 &  n(y)-\delta(x,y)+x_{n(y)+1}\})&\\
 \emptyset &  & \textrm{, if $\delta(x,y) = 0$}\\
x & & \textrm{, if $\delta(x,y) = \infty$.}
 \end{array} \right.\]
With these notations we define the metric
$d : \mathcal{N} \times \mathcal{N} \to [0,\infty[$ by
\[d(x,y) = |l(x)-l(x\wedge y)|+ |l(y)-l(x\wedge y)|.\]
Figure \ref{fig:tree} illustrates the metric space $(\mathcal N,d)$ which
is obviously geodesic.
\begin{figure}
\centering
\includegraphics[width=0.4\textwidth]{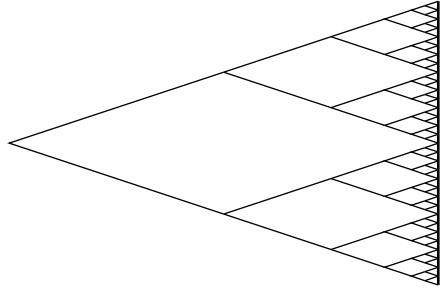}
\caption{An illustration of space $\mathcal{N}$.}
\label{fig:tree}
\end{figure}

We verify  that
\begin{equation}\label{example:dimensions}
\dimm(\mathcal{N}) = \dimh(\mathcal{N}_\infty) = 1
\end{equation}
and
\begin{equation}\label{example:porosity}
\por(\mathcal{N}_\infty) = \frac{1}{2}.
\end{equation}
Indeed, since $\dist(\mathcal N_n,\mathcal N_\infty)=2^{-n}$, the set
$\mathcal N\setminus\cup_{k=1}^n\mathcal N_k$ can be covered by $2^n$
closed balls centred in $\mathcal N_n$ with radii $2^{-n}$. On the other hand,
$\cup_{k=1}^n\mathcal N_k$ can be covered by $n2^n$ balls with radii
$2^{-n}$, and therefore, $\mathcal{N}$
can be covered by $(n+1)2^n$ such balls. Hence
$\dimh(\mathcal{N}_\infty) \le \dimm(\mathcal{N}) \le 1$.
Clearly, $\dimh(\mathcal{N}_\infty)=1$ which gives
\eqref{example:dimensions}.
For \eqref{example:porosity}, take any $x\in \mathcal{N}_\infty$ and $0<r<1$.
Choose $n\in\mathbb{N}$ such that $2^{-n}\le r<2^{-n+1}$. Since
$\dist((x|_n,2^{-n+1}-r),\mathcal N_\infty)=r$, we have for all
$\varepsilon>0$
\[
B((x|_{n},2^{-n+1}-r),(1-\varepsilon)r)
 \subset B(x,2r)\setminus\mathcal{N}_\infty.
\]
This implies \eqref{example:porosity}.

Note that the space $(\mathcal{N}, d)$ is doubling with
a doubling constant $3$, that is, every closed ball with radius $2r$ can be
covered with $3$ closed balls with radius $r$. In particular, it carries a
doubling measure by \cite{LuukkainenSaksman1998}. Moreover, while
$\cup_{k=0}^n\mathcal{N}_k$  is $1$-regular for all $n$ and
$\mathcal{N}_\infty$ is also 1-regular, the set
$\mathcal{N}\setminus\mathcal{N}_\infty$ is not.
\end{example}

By taking a closer look at the above examples one recognizes
that porous sets having maximal dimension are exceptional: there is a
set $N$ with $\mu(N)=0$ so that
$\dimp(A)<\dimp(X)$ for all porous sets $A\subset X\setminus N$. This is not
a coincidence as indicated by the following result.

\begin{theorem}\label{dimestdoub}
Suppose that $\mu$ is a doubling measure on $X$. Then there is $N\subset X$
such that $\mu(N)=0$ and
\[
\dimp(A)\leq\dimp(X)-c(\log\tfrac 1\varrho)^{-1}\varrho^t
\]
for all $\roo$-porous sets $A\subset X\setminus N$ where
$t=\frac{\log c_\mu}{\log 2}$. Here $c$ is as in Corollary \ref{localsausage}.
\end{theorem}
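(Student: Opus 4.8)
The plan is to run the argument behind Theorem~\ref{dimestsreg}, the obstruction being that a merely doubling $\mu$ gives no comparison between the size of $\mu\bigl(B(x,r)\bigr)$ and a power of $r$. To get around this I would first discard a $\mu$-null set $N$ on which $\mu$ is too strongly concentrated, and then, on the complement of $N$, recover the lower ball bounds that were automatic in the $s$-regular case by stratifying.

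Set $d=\dimp(X)$, which is finite because $\mu$ is doubling, and for $x\in X$ put $\overline\alpha(x)=\limsup_{r\downarrow 0}\frac{\log\mu(B(x,r))}{\log r}$. I would take $N=\{x\in X:\overline\alpha(x)>d\}$, and the first step is to prove $\mu(N)=0$. Write $N=\bigcup_n N_n$ with $N_n=\{x:\overline\alpha(x)>d+\tfrac1n\}$; for $x\in N_n$ there are arbitrarily small $r$ with $\mu(B(x,r))<r^{s_n}$, where $s_n=d+\tfrac1{2n}$, so $\liminf_{r\downarrow 0}\mu(B(x,r))/r^{s_n}<\infty$ for every $x\in N_n$. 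Restricting $\mu$ to a set of finite measure and applying the density estimate for radius based packing measures \cite{Cutler}, one gets that $\mathcal{P}^{s_n}(N_n)$ is at least a positive multiple of $\mu(N_n)$; hence $\mu(N_n)>0$ would force $\dimp(N_n)\ge s_n>d=\dimp(X)$, which is impossible. Thus $\mu(N_n)=0$ for all $n$, so $\mu(N)=0$; equivalently, for $x\notin N$ and every $\eta>0$ one has $\mu(B(x,r))\ge r^{d+\eta}$ for all sufficiently small $r$.

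Now fix a $\varrho$-porous set $A\subset X\setminus N$, a number $\varrho'<\varrho$ and $\eta>0$. By Remark~\ref{introrems}.(5) write $A=\bigcup_{i,j}A_{ij}$ with $\por(A,x,r)>\varrho'$ for all $x\in A_{ij}$ and $0<r<\tfrac1i$, and with $\diam(A_{ij})$ so small that $A_{ij}\subset B(x_{ij},r_{ij})$ for some $x_{ij}\in A_{ij}$ with $r_{ij}<\tfrac{\varrho'}{12}\min\{\tfrac1i,r_\mu\}$; each $A_{ij}$ is then uniformly $\varrho'$-porous. Stratify $X\setminus N=\bigcup_m B_m$ with $B_m=\{x\notin N:\mu(B(x,r))\ge r^{d+\eta}\text{ for all }0<r<\tfrac1m\}$. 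Since $\dimp$ is countably stable and $A=\bigcup_{i,j,m}(A_{ij}\cap B_m)$, it suffices to bound $\dimm(E)$ for a fixed $E=A_{ij}\cap B_m$. Corollary~\ref{localsausage}, applied to the uniformly $\varrho'$-porous set $A_{ij}$, gives $\mu(E(r))\le\mu\bigl(A_{ij}(r)\bigr)\le C\,\mu\bigl(B(x_{ij},r_{ij})\bigr)(r/r_{ij})^{\delta'}$ for $0<r<r_{ij}$, where $\delta'=c(\log\tfrac1{\varrho'})^{-1}(\varrho')^t$ and $C$ is independent of $r$. Taking a maximal $r$-separated subset $\{y_1,\dots,y_{k_r}\}$ of $E$, the balls $B(y_l,r/2)$ are pairwise disjoint and lie in $E(r)$, and since each $y_l\in B_m$ we get $\mu(B(y_l,r/2))\ge(r/2)^{d+\eta}$ once $r<\tfrac2m$; hence $k_r(r/2)^{d+\eta}\le\mu(E(r))$, so $k_r\le C'r^{\delta'-d-\eta}$ for small $r$. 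As $\{B(y_l,r)\}_{l=1}^{k_r}$ covers $E$, for $\lambda>d+\eta-\delta'$ we obtain $M^\lambda(E,r)\le k_r r^\lambda\le C'r^{\lambda+\delta'-d-\eta}\to0$ as $r\downarrow0$. Therefore $\dimm(E)\le d+\eta-\delta'$, so $\dimp(A)\le d+\eta-\delta'$, and letting $\eta\downarrow0$ and $\varrho'\uparrow\varrho$ (so that $\delta'\to c(\log\tfrac1\varrho)^{-1}\varrho^t$) gives the assertion.

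The main obstacle is the step $\mu(N)=0$: unlike in the $s$-regular case, one has to extract the quantitative lower bound $\mu(B(x,r))\ge r^{d+\eta}$, valid off a $\mu$-null set, from nothing but the identity $\dimp(X)=d$, and this is exactly what the packing density estimate supplies. Everything afterwards is a routine transcription of the proof of Theorem~\ref{dimestsreg}, the stratification $\{B_m\}$ being inserted precisely so that the lower ball bound holds uniformly at a fixed scale, with the elementary packing count replacing Lemma~\ref{thm:content}.
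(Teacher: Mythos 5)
Your proposal is correct and follows essentially the same route as the paper: your null set $N$ and the lower bound $\mu\bigl(B(x,r)\bigr)\ge r^{d+\eta}$ off $N$ obtained via Cutler's packing-density estimate is exactly the paper's Lemma \ref{localdimlemma}, and your stratification $\{B_m\}$ together with the maximal-separated-set packing count against the sausage bound of Corollary \ref{localsausage} reproduces the paper's Lemma \ref{dimlemma1} and its application in the proof of Theorem \ref{dimestdoub}. The only differences are presentational (you inline the second lemma and phrase the null set via the upper local dimension), not mathematical.
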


We prove Theorem \ref{dimestdoub} using Corollary \ref{localsausage} and the
following two lemmas.

\begin{lemma}\label{localdimlemma}
Suppose that $\mu$ is a doubling measure on
$X$. Then there is $N\subset X$ with $\mu(N)=0$ such that for all
$s>\dimp(X)$ we have
\[\lim_{r\downarrow 0}\frac{\mu(B(x,r))}{r^s}=\infty\]
 for every $x\in X\setminus N$.
\end{lemma}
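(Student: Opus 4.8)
The statement is about packing dimension of $X$, so the natural strategy is to exploit the packing-measure characterization $\dimp(X)=\sup\{s\geq 0:\mathcal P^s(X)>0\}$ recorded in Section 3, together with basic density estimates relating $\mathcal P^s$ to the ratios $\mu(B(x,r))/r^s$. Fix any $s>\dimp(X)$; then $\mathcal P^s(X)=0$. The idea is to show that the set
\[
N_s=\Bigl\{x\in X:\liminf_{r\downarrow 0}\frac{\mu(B(x,r))}{r^s}<\infty\Bigr\}
\]
has $\mu(N_s)=0$, and then take $N=\bigcup_m N_{s_m}$ for a sequence $s_m\downarrow\dimp(X)$, which still has $\mu(N)=0$. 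On $X\setminus N$ one has $\liminf_{r\downarrow 0}\mu(B(x,r))/r^s=\infty$ for every $s>\dimp(X)$; upgrading the $\liminf$ to a genuine $\lim$ is then a short doubling argument, since if $\mu(B(x,r_j))/r_j^s$ stays bounded along some sequence $r_j\downarrow 0$, then by choosing $s'\in(\dimp(X),s)$ and using \eqref{ht1} of Lemma \ref{thm:harjoitustehtava} to compare $\mu(B(x,r))$ for $r\in[r_{j+1},r_j]$ with $\mu(B(x,r_j))$, one contradicts $\liminf_{r\downarrow 0}\mu(B(x,r))/r^{s'}=\infty$ (the doubling factor between consecutive dyadic-type scales is bounded, so a uniform bound along a subsequence forces a uniform bound along all small $r$, after adjusting the exponent downward).

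**Key steps, in order.** First I would fix $s>\dimp(X)$ and a bounded portion $E\subset X$ (by $\sigma$-finiteness and separability it suffices to handle bounded sets, writing $X$ as a countable union of them). Second, I would show $\mu(N_s\cap E)=0$ by a Vitali/$5r$-covering argument: for each $x\in N_s$ pick arbitrarily small radii $r$ with $\mu(B(x,r))\leq M r^s$ for a fixed constant $M$ (decomposing $N_s$ into countably many pieces on which the same $M$ works), extract from the resulting fine cover a countable disjoint subfamily $\{B(x_i,r_i)\}$ whose $5$-dilates cover, and observe that $\sum_i (5r_i)^s\lesssim \sum_i r_i^s\lesssim M^{-1}\sum_i\mu(B(x_i,r_i))\leq M^{-1}\mu(X\text{'s bounded part})$; refining the radii to be as small as we like shows the set has $\mathcal H^s$-content — or directly $\mathcal P^s$-premeasure — controlled, and combined with $\mathcal P^s(X)=0$ one deduces the set is $\mathcal P^s$-null, hence (since the density lower bound $\mu(B(x,r))\leq Mr^s$ at arbitrarily small scales forces positive upper $s$-density of $\mathcal P^s$ where $\mu>0$) that $\mu(N_s\cap E)=0$. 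Third, take $N=\bigcup_{m}N_{s_m}$ with $s_m\downarrow\dimp(X)$; then $\mu(N)=0$ and for $x\notin N$, $\liminf_{r\downarrow0}\mu(B(x,r))/r^s=\infty$ whenever $s>\dimp(X)$. Fourth, the liminf-to-lim upgrade: given $s>\dimp(X)$, pick $s'\in(\dimp(X),s)$ and use Lemma \ref{thm:harjoitustehtava} to show boundedness of $\mu(B(x,r))/r^s$ along any sequence implies boundedness of $\mu(B(x,r))/r^{s'}$ for all small $r$, contradicting the liminf statement for $s'$.

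**Main obstacle.** The delicate point is the second step: correctly passing between the three notions — the density condition $\liminf \mu(B(x,r))/r^s<\infty$, the packing premeasure $\mathcal P^s$, and $\mu$. The cleanest route is a standard density theorem for packing measures (if $\mathcal P^s(X)=0$ then $\mu$ cannot give positive mass to the set where $\mu$ has finite upper $s$-density with respect to $\mathcal P^s$-type packings), but one must be careful that the packing measure here is the radius-based one of Cutler \cite[\S 3.1]{Cutler} and verify the comparison inequalities hold in a general separable metric space. If a ready-made density theorem is not convenient, the covering argument above works directly: one shows the set $\{x: \mu(B(x,r))\le Mr^s \text{ for arbitrarily small } r\}$, intersected with any bounded set, is $\mu$-null by noting that otherwise one could pack into it, at a fixed small scale, disjoint balls of total $\mu$-mass comparable to its (positive) $\mu$-measure but total $s$-power-radii summing to at most $M^{-1}$ times that mass — and letting the scale shrink while invoking $\mathcal P^s=0$ yields the contradiction. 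Everything else (the $\sigma$-finiteness reduction, the countable union over $s_m$, the doubling upgrade) is routine.
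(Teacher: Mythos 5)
Your proposal is correct in its main route and is essentially the paper's proof: fix $s_k\downarrow\dimp(X)$, note $\mathcal{P}^{s_k}(A)=0$ whenever $\dimp(A)\le\dimp(X)<s_k$, and kill the set where $\liminf_{r\downarrow0}\mu(B(x,r))/r^{s_k}<\infty$ by the density theorem for the radius-based packing measure --- the paper does exactly this by decomposing that set into Borel pieces with a common bound $D$ and citing \cite[Theorem 3.16]{Cutler}, which gives $\mathcal{P}^{s_k}(A)\ge\frac{1}{CD}\mu(A)$, then taking $N=\bigcup_k N_k$. Two points in your write-up should be repaired, though neither is fatal. First, in the covering sketch your inequalities run the wrong way: the density bound $\mu(B(x_i,r_i))\le Mr_i^s$ gives $\sum_i r_i^s\ge M^{-1}\sum_i\mu(B(x_i,r_i))\ge (CM)^{-1}\mu(N_s\cap E)$ (disjointness, the $5r$-covering theorem and doubling), i.e.\ a \emph{lower} bound on packing premeasures of every countable piece of the set; this, after the routine passage from premeasure to $\mathcal{P}^s$ via countable covers (the premeasure alone is not countably subadditive, so this step is where the contradiction with $\mathcal{P}^s=0$ is actually obtained), is the density theorem you invoke. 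An \emph{upper} bound on the $\mathcal{H}^s$-content of $N_s\cap E$, which is what your displayed chain produces, says nothing about $\mu(N_s\cap E)$, and ``$\mathcal{P}^s$-nullity'' of the set needs no computation at all since $s>\dimp(X)$. Second, your fourth step is vacuous: once $\liminf_{r\downarrow0}\mu(B(x,r))/r^s=\infty$ for every $s>\dimp(X)$ on $X\setminus N$, the limit is automatically $\infty$, so no doubling ``upgrade'' via Lemma \ref{thm:harjoitustehtava} is needed (and the paper has no such step).
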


\begin{proof}
Choose a decreasing sequence $(s_k)$ such that $s_k\downarrow\dimp(X)$ as
$k\rightarrow\infty$.
We claim that for all $k\in\mathbb N$ there is $N_k\subset X$ with
$\mu(N_k)=0$ such that
\begin{equation}\label{ldinfty}
\lim_{r\downarrow 0}\frac{\mu(B(x,r))}{r^{s_k}}=\infty
\end{equation}
for all $x\in X\setminus N_k$. Fix $k\in\mathbb N$ and suppose to the contrary
that there are $0<D<\infty$ and a
Borel set $A\subset X$ such that $\mu(A)>0$ and
$\liminf_{r\downarrow0}\mu(B(x,r))/r^{s_k}<D$ for all $x\in A$. By
\cite[Theorem 3.16]{Cutler} there exists $C>0$ such that
$\mathcal{P}^{s_k}(A)\geq\frac 1{CD}\mu(A)>0$ which
is impossible since $\dimp(A)\leq\dimp(X)<s_k$. Thus \eqref{ldinfty} is
proved. Defining $N=\cup_{k=0}^\infty N_k$, verifies the claim.
\end{proof}

The following lemma is a substitute for Lemma \ref{thm:content} in
metric spaces that carry a doubling measure.

\begin{lemma}\label{dimlemma1}
Suppose that $\mu$ is a doubling measure on $X$. Let $0<\lambda<\dimp(X)$ and
let $N\subset X$ be as in Lemma \ref{localdimlemma}.
If $A\subset X\setminus N$ has the property that
\[
\limsup_{r\downarrow 0}\frac{\mu(A(r))}{r^\lambda}<\infty,
\]
then $\dimp(A)\leq\dimp(X)-\lambda$.
\end{lemma}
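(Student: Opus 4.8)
The plan is to cover $A$ by a countable family of bounded pieces and estimate the upper Minkowski dimension of each piece using the measure of its $r$-neighbourhood, much as in the proof of Lemma \ref{thm:content}, but now using only the doubling property and the density lower bound from Lemma \ref{localdimlemma}. First I would reduce to the bounded case: writing $A=\bigcup_{i}A_i$ with each $A_i$ bounded, it suffices by countable stability of $\dimp$ to prove the bound for each $A_i$, and the hypothesis $\limsup_{r\downarrow0}\mu(A(r))/r^\lambda<\infty$ passes to each $A_i$ since $A_i(r)\subset A(r)$. So assume $A$ is bounded, fix $\lambda<\lambda'<\dimp(X)$, and set out to show $\dimm(A)\le\dimp(X)-\lambda$, i.e. $\limsup_{r\downarrow0}M^{\mu}(A,r)\,r^{\mu-(\dimp(X))}$-type quantities vanish for exponents above $\dimp(X)-\lambda$. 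Concretely, I want: for every $\nu>\dimp(X)-\lambda$, $\limsup_{r\downarrow0}M^{\nu}(A,r)<\infty$ (in fact $=0$).

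The key step is the covering estimate. Fix $r>0$ small and take a maximal $r$-separated set $\{x_i\}_{i=1}^{k}$ in $A$; then the balls $B(x_i,r/2)$ are pairwise disjoint and $A\subset\bigcup_i B(x_i,r)$, so $M^{\nu}(A,r)\le k r^{\nu}$. To bound $k$, I would use the disjointness and the inclusion $B(x_i,r/2)\subset A(r)$ to get $\sum_i\mu(B(x_i,r/2))\le\mu(A(r))\le C r^{\lambda}$ for small $r$ by hypothesis. Now comes the point where Lemma \ref{localdimlemma} enters: pick a single basepoint — careful, we need a uniform lower bound, so instead for each $i$ use that $x_i\in X\setminus N$ gives $\mu(B(x_i,\roo))/\roo^{s}\to\infty$ for any fixed $s>\dimp(X)$; choosing $s$ with $\dimp(X)<s<\lambda'$ and $s<\lambda+ \nu$ appropriately, there is $r_i>0$ with $\mu(B(x_i,\roo))\ge\roo^{s}$ for $\roo<r_i$. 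The obstacle is that $r_i$ depends on $i$; the standard fix is a further decomposition of $A$ into countably many pieces $A^{(m)}=\{x\in A\setminus N:\mu(B(x,\roo))\ge\roo^{s}\text{ for all }\roo<1/m\}$, handle each $A^{(m)}$ separately, and on $A^{(m)}$ use the bound for all $r<1/m$ uniformly. Then $k\, (r/2)^{s}\le\sum_i\mu(B(x_i,r/2))\le Cr^{\lambda}$, so $k\le C' r^{\lambda-s}$, whence $M^{\nu}(A^{(m)},r)\le C' r^{\nu+\lambda-s}\to0$ as $r\downarrow0$ provided $\nu>s-\lambda$; since $s$ can be taken arbitrarily close to $\dimp(X)$, this holds for all $\nu>\dimp(X)-\lambda$. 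Therefore $\dimm(A^{(m)})\le\dimp(X)-\lambda$ for each $m$, and by countable stability $\dimp(A)=\dimp\bigl(\bigcup_{i,m}A_i^{(m)}\bigr)\le\dimp(X)-\lambda$.

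The main obstacle, as indicated, is the non-uniformity of the radius threshold coming from Lemma \ref{localdimlemma}: the limit $\mu(B(x,\roo))/\roo^{s}\to\infty$ holds pointwise but not uniformly over $A$, so one cannot directly convert it into a single estimate valid for all small $r$. The resolution is the routine $\sigma$-decomposition of $A$ into the sets where the bound kicks in before scale $1/m$, together with the fact that $\dimp$ (and hence the conclusion) is stable under countable unions. One must also take a little care with the ordering of the exponents — choosing $s$ strictly between $\dimp(X)$ and $\min\{\lambda',\lambda+\nu\}$ — but once $A$ is fixed bounded and $m$ fixed, the separated-set counting argument is entirely parallel to the right-hand inequality in the proof of Lemma \ref{thm:content}, with $a_\mu r^{s}$ there replaced by the local lower bound $\roo^{s}$ here.
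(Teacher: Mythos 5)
Your proposal is correct and follows essentially the same route as the paper: decompose $A$ into countably many pieces on which the pointwise lower bound $\mu(B(x,r))>r^{s}$ from Lemma \ref{localdimlemma} holds uniformly below a fixed scale (the paper's sets $A_{ki}$ combine this with boundedness via $A\cap B(x_0,i)$), then run the maximal disjoint-ball counting against $\mu(A(r))\le Cr^{\lambda}$ to get $\dimm\le s-\lambda$ on each piece, and finish by countable stability of $\dimp$ and letting $s\downarrow\dimp(X)$. The only blemish is the stray condition ``$\dimp(X)<s<\lambda'$'' with $\lambda'<\dimp(X)$, which is vacuous as written; the constraint you actually need and also state, $\dimp(X)<s<\lambda+\nu$, is the right one, so this does not affect the argument.
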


\begin{proof}
Fix $x_0\in X$. Let $s>\dimp(X)$ and define
\[
A_{ki}=\{x\in A\cap B(x_0,i)\,:\,\mu(B(x,r))>r^s\text{ for all }0<r<\frac 1k\}
\]
for all $k$, $i\in\mathbb{N}$. Since $A\subset X\setminus N$ we see
from Lemma \ref{localdimlemma} that
$A=\cup_{k,i}A_{ki}$. It suffices to show that
\begin{equation}\label{dimAk}
\dimm(A_{ki})\leq s-\lambda\text{ for all }k,i\in\mathbb{N}.
\end{equation}
Let $k$, $i\in\mathbb{N}$. Choose $r_0>0$ and $C<\infty$ such that
$\mu(A_{ki}(r))\leq\mu(A(r))\leq Cr^\lambda$
when $0<r<r_0$. If $0<r<\min\{\frac12r_0,\frac 1k\}$, we choose
$\{ B(x_l,r) \}_{l \in I}$ to be a maximal countable collection of mutually
disjoint balls centered at $A_{ki}$. Note that trivially
\begin{equation}\label{Ak5r}
  A_{ki}\subset \bigcup_{l\in I}B(x_l,2r).
\end{equation}
Now
\begin{equation*}
C2^\lambda r^\lambda\ge\mu(A_{ki}(2r))\geq\sum_{l\in I}\mu(B(x_l,r))\ge\#I r^s.
\end{equation*}
This implies that $\#I\le C2^\lambda r^{\lambda-s}<\infty$ which combined
with \eqref{Ak5r} gives
\[
M^{s-\lambda}(A_{ki},2r)\leq \# I (2r)^{s-\lambda} \leq C2^s.
\]
Now
\eqref{dimAk} follows as $r\downarrow 0$.
\end{proof}

\begin{proof}[Proof of Theorem \ref{dimestdoub}]
Let $N$ be as in Lemma \ref{localdimlemma} and let $A\subset X\setminus N$.
Fix $\roo'<\roo$. By Remark \ref{introrems}.(5),
$A$ is a countable union of sets of the form
\[
E=\{x\in A\cap B(x_0,r_0)\,:\,\por(A,x,r)>\roo'\text{ for all }0<r<r_0\}
\]
where $x_0\in X$ and $r_0>0$. Since $\roo'<\roo$ is arbitrary, it
suffices to show that $\dimp(E)\leq\dimp(X)-\delta$ where $\delta$ is
as in Corollary \ref{localsausage} with $\roo$ replaced by $\roo'$. This
follows from Lemma \ref{dimlemma1} since $E\subset X\setminus N$ and
$\limsup_{r\downarrow 0}\mu(E(r))/r^\delta<\infty$ by Corollary
\ref{localsausage}.
\end{proof}

\begin{remark}\label{bestone?}
(1) We do not know if $\dimh(A)\leq \dimh(X)-\delta$ for some $\delta>0$ in
Theorem
\ref{dimestdoub}. Of course, this question is relevant only when
$\dimh(X)<\dimp(X)$.

(2) The essential qualitative difference between Theorems
\ref{dimestsreg} and \ref{dimestdoub} is that in the doubling case we have an
extra factor $(\log\tfrac 1\varrho)^{-1}$ in $\delta$.  This extra factor
appears also in $\mathbb R^n$ if one makes a simple estimate for the Minkowski
dimension using mesh cubes whose side lengths are powers of $\varrho$. To
obtain the optimal upper bound in $\mathbb R^n$ one has to utilize the porosity
at ``all balls'', that is, one has to use mesh cubes whose side lengths are
powers of $D$ for some fixed $D$ independent of $\varrho$.
In our proofs we use annuli and mean porosity instead of mesh cubes. As pointed
out in Remark \ref{uniismean}.(1), the annuli $A_k(x)$ defined using some fixed
$D$ may be empty in general metric spaces for many $k$. Thus we obtain
mean porosity with $p$ containing the factor $(\log\tfrac 1\varrho)^{-1}$.
We do not know whether this factor $(\log\tfrac 1\varrho)^{-1}$ is necessary
or not in Theorem \ref{dimestdoub} although in Remark \ref{uniismean}.(1) it
is which can be seen by considering a disjoint union of Cantor sets
$C_{\roo_i}$ where $\roo_i$ tends to zero.

(3) In Theorem \ref{dimestsreg} the constant $c$ depends on $a_\mu$,
$b_\mu$ and $s$. One may ask whether it is possible that $c$ depends on
$s$ only. However, this is not the case. If in Example \ref{infinitetree}
one chooses $\mathcal N_n=\{1,2\}\times ]0,\lambda^n]$ for $\lambda<\frac 12$,
then the resulting space $\mathcal N$ is 1-regular,
$\por(\mathcal N_\infty)=\frac 12$ and $\dimp(\mathcal N_\infty)\to 1$ as
$\lambda\to\frac 12$. Note that in this case $\frac{b_\mu}{a_\mu}\to\infty$
and thus $c\to 0$ as $\lambda\to\frac 12$.
\end{remark}

\section{Uniform porosity and regular sets}

In this section we prove that if $X$ is $s$-regular and complete, then
uniformly porous and regular subsets of $X$ are closely related in
the following sense: $A\subset X$ is uniformly porous if and only if
there is $0<t<s$ and a $t$-regular set $F\subset X$ such that
$A\subset F$. In $\R^n$ this is well known, see
for example \cite{Salli1991}, \cite[Theorem 5.2]{L},
\cite[Proposition 4.3]{Caetano2002},
\cite[(proof of) Theorem 4.1]{KaenmakiSuomala2004}, and
\cite[Example 6.8]{KaenmakiVilppolainen2006}. We begin by showing that for
each $0<t<s$ there exists a $t$-regular set $F\subset X$. This is a
consequence of the following lemma which was
proven in $\mathbb{R}^n$ in an unpublished Licentiate thesis of
Pirjo Saaranen, see also \cite[Theorem 3.1]{ms}. The proof for
complete $s$-regular spaces is almost the same
but it is included here for the sake of completeness. We denote by $\spt(\mu)$
the support of $\mu$, that is, $\spt(\mu)$ is the smallest closed set $F$ with
$\mu(X\setminus F)=0$.

\begin{lemma}\label{lemma:regular_measures}
Assume that $X$ is complete, $\mu$ is $s$-regular on $X$ and $0<t<s$. Let
$z\in X$ and $0<R<r_\mu$. Then there is a measure $\nu$ with
$\spt(\nu)\subset B(z,2R)$ such that
$\nu(B(z,2R))=R^t$ and
\[
a_\nu r^t\le\nu(B(x,r))\le b_\nu r^t
\]
for all $x\in\spt(\nu)$ and $0<r<R$. Here the constants $a_\nu>0$ and
$b_\nu>0$ depend only on $s$, $t$, $a_\mu$ and $b_\mu$.
\end{lemma}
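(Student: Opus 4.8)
The goal is to build, inside $B(z,2R)$, a $t$-regular measure $\nu$, i.e. one whose ball masses scale like $r^t$, starting from an $s$-regular $\mu$ with $s>t$. The natural idea is a \emph{Cantor-type construction}: iteratively select a sparse, well-separated family of sub-balls of $B(z,2R)$ at geometrically decreasing scales, discarding mass in between, so that the self-similar dimension of the resulting limit set is $t$ rather than $s$. The tuning parameter is how many children each ball keeps: if a ball of radius $\rho$ is replaced at the next generation by children of radius $\lambda\rho$, one needs roughly $\lambda^{-t}$ children to make the measure $t$-regular, and $s$-regularity of $\mu$ guarantees that enough disjoint children of radius $\lambda\rho$ actually fit inside (there is room for on the order of $\lambda^{-s}\gg\lambda^{-t}$ of them, with separation). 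Completeness of $X$ is what makes the nested intersection nonempty, so the limit set (the support of $\nu$) is well defined.

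\textbf{Construction.} Fix a small ratio $\lambda\in(0,1)$ (depending on $s,t,a_\mu,b_\mu$, to be chosen at the end) and let $m=\lceil\lambda^{-t}\rceil$. Put $Q_0=B(z,2R)$ with ``radius'' $2R$ (or rescale so the top ball has radius $R$; I will arrange $\nu(B(z,2R))=R^t$ at the end by a single normalization). Inductively, given a generation-$n$ ball $Q$ of radius $\rho_n=\lambda^n\cdot 2R$, use separability together with the lower bound $\mu(B(x,r))\ge a_\mu r^s$ and upper bound $\mu(B(x,r))\le b_\mu r^s$ (Lemma~\ref{thm:content}-style packing counting, or directly a maximal disjoint family) to select $m$ points $x_1,\dots,x_m$ in (a slightly shrunken copy of) $Q$ that are mutually $10\lambda\rho_n$-separated; this is possible provided $c\,\lambda^{-s}\ge m$, which holds once $\lambda$ is small since $\lambda^{-s}/\lambda^{-t}=\lambda^{-(s-t)}\to\infty$. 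The children of $Q$ are $B(x_i,\lambda\rho_n)$, $i=1,\dots,m$; they are pairwise disjoint and contained in $Q$. Let $F_n$ be the union of all generation-$n$ balls and $F=\bigcap_n F_n$; by completeness $F\ne\emptyset$, and in fact each nested sequence of generation balls has a single point in its intersection because the radii shrink to $0$. Define $\nu$ on $F$ as the standard Cantor measure: each of the $m^n$ generation-$n$ balls gets mass $m^{-n}\cdot(2R)^t$; this is a consistent mass distribution, extend it to a Borel regular measure on $X$ supported in $\bar F\subset B(z,2R)$ via Carathéodory/Method I. Finally rescale by the constant $(R/2R)^t$-type factor, i.e.\ replace $\nu$ by $R^t\nu/\nu(B(z,2R))$, to get $\nu(B(z,2R))=R^t$ exactly.

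\textbf{Regularity of $\nu$.} The key estimate: for $x\in\spt(\nu)=\bar F$ and $0<r<R$, pick $n$ with $\rho_{n+1}\le r<\rho_n$ (i.e.\ $\lambda^{n+1}\cdot 2R\le r<\lambda^n\cdot 2R$). Upper bound: $B(x,r)$ is covered by the generation-$n$ ball containing $x$ together with at most a bounded number $K=K(\lambda)$ of neighbouring generation-$n$ balls (bounded because those balls are $10\lambda\rho_n$-separated and all sit inside $B(x, r+2\rho_n)\subset B(x,3\rho_n)$; compare volumes using $s$-regularity of $\mu$ — or note the ambient space is doubling). Each contributes mass $m^{-n}(2R)^t\le \lambda^{nt}(2R)^t\le (\rho_n)^t/\lambda^{?}$; since $\rho_n=\rho_{n+1}/\lambda\le r/\lambda$, this gives $\nu(B(x,r))\le K\lambda^{-t}(2R)^t\lambda^{nt}= K\lambda^{-t}\rho_n^t\le b_\nu r^t$ with $b_\nu=K\lambda^{-2t}$. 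Lower bound: $B(x,r)$ contains the entire generation-$(n+1)$ ball containing $x$, whose $\nu$-mass is $m^{-(n+1)}(2R)^t\ge \lceil\lambda^{-t}\rceil^{-(n+1)}(2R)^t\ge \lambda^{(n+1)t}(2R)^t=\rho_{n+1}^t\ge(\lambda r)^t$, i.e.\ $\nu(B(x,r))\ge a_\nu r^t$ with $a_\nu=\lambda^t$. Tracking the normalizing factor through these bounds only changes $a_\nu,b_\nu$ by bounded amounts, and all constants depend only on $s,t,a_\mu,b_\mu$ (through the admissible choice of $\lambda$ and the counting constants $K$, $c$), as required.

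\textbf{Main obstacle.} The real work is the child-selection step: verifying that a genuinely \emph{disjoint} and \emph{well-separated} family of $\lceil\lambda^{-t}\rceil$ sub-balls of the right radius always exists inside a ball of the previous generation, using only the two-sided bound \eqref{sreg}. This is where $s>t$ is used decisively — one needs the packing number of $Q$ by $\lambda\rho_n$-balls to exceed $\lambda^{-t}$, which follows from comparing $\mu(Q)\asymp \rho_n^s$ with $\mu$ of a single small ball $\asymp(\lambda\rho_n)^s$, giving a packing number $\gtrsim (b_\mu/a_\mu)^{-1}\lambda^{-s}$; choosing $\lambda$ so small that $(a_\mu/b_\mu)\lambda^{-s}\ge 2\lambda^{-t}$ (possible since $s-t>0$) closes the argument, and this inequality is exactly what forces $\lambda$, hence $a_\nu,b_\nu$, to depend on $a_\mu,b_\mu,s,t$. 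A secondary but routine point is checking that the Method-I measure one constructs is Borel regular and that its support is exactly $\bar F$ with $\bar F\subset B(z,2R)$; both are standard once the balls are nested and shrinking.
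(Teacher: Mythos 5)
Your construction is essentially the paper's own proof: an iterated selection of well-separated sub-balls (whose existence at a fixed ratio follows from the two-sided bound \eqref{sreg} together with $s>t$), a multiplicative mass distribution extended by Carath\'eodory, completeness giving a nonempty limit set, and two-sided estimates for $\nu(B(x,r))$ from containment of/in generation balls; the paper merely fine-tunes the contraction ratio $d_1$ so that $Md_1^t=1$ exactly, while you accept bounded multiplicative errors from $m=\lceil\lambda^{-t}\rceil$, which is equally fine. Two small points to repair: in your lower bound the generation-$(n+1)$ ball containing $x$ need not lie inside $B(x,r)$ (you only know $r\ge\rho_{n+1}$, and that ball is only contained in $B(x,2\rho_{n+1})$), so you must descend one further generation exactly as the paper does with its index $l+2$, at the cost of an extra factor $\lambda^t$ in $a_\nu$; and since only $R<r_\mu$ is assumed, the packing count and the regularity estimates should be run on balls centred in $B(z,R)$ with radii at most $R$ (the support still ending up in $B(z,2R)$), rather than applying \eqref{sreg} directly to the ball of radius $2R$.
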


\begin{proof}
Let $0 < r \le 2^{-2-1/t}R$. Recalling that $X$ is separable, we choose a
maximal countable collection $\{ B(x_i,2^{1+1/t}r) \}_{i \in I}$ of mutually
disjoint balls centred at $B(z,R)$. We may take $x_1=z$. It follows that the
balls $B(x_i,2^{2+1/t}r)$ cover the set $B(z,R)$. Thus
\begin{align*}
\#Ib_\mu(2^{1+1/t}r)^s
\ge & \sum_{i=1}^{\#I}\mu(B(x_i, 2^{1+1/t}r))
     \ge \sum_{i=1}^{\#I}\frac{a_\mu}{2^sb_\mu}\mu(B(x_i, 2^{2+1/t}r))\\
\ge & \frac{a_\mu}{2^sb_\mu}\mu(B(z,R)) \ge \frac{a_\mu^2}{2^sb_\mu}R^s
\end{align*}
by Lemma \ref{thm:harjoitustehtava}, and hence
\begin{equation}\label{ballnumber}
c_1\Big(\frac{R}{r}\Big)^s \le \#I\textrm{ for all } 0<r\le 2^{-2-1/t}R,
\end{equation}
where
\[
c_1 =\frac{a_\mu^2}{2^{2s+s/t}b_\mu^2}.
\]

Choose $d>0$ so small that
\begin{equation}\label{eq:d}
 d<\frac 1{10}2^{-1/t}\textrm{, } d^{s-t}\le\frac{c_1}2
  \textrm{ and } d^t\le\frac 12.
\end{equation}
Taking $r=dR$ in the above process gives disjoint balls
$B(x_i,2^{1+1/t}dR)$ with $x_i\in B(z,R)$. Moreover, by
\eqref{ballnumber} we get the following estimate for the number $m$
of such balls
\begin{equation}\label{eq:m}
\frac{c_1}{d^s}\le m.
\end{equation}
Let $M \in \mathbb{N}$ be such that
\begin{equation}\label{eq:M}
d^{-t}-\frac{1}{2} \le M < d^{-t} + \frac{1}{2}.
\end{equation}
Because
\[
d^{-t}+\frac{1}{2}\le\frac{c_1}{2d^s} + \frac{1}{2}< m
\]
by \eqref{eq:d} and \eqref{eq:m}, we may take $M$ balls from the collection
of disjoint balls
$\{B(x_i,2^{1+1/t}dR) : i = 1, \dots, m\}$. Having roughly advanced
towards our $t$-regular measure on this scale by taking suitable balls,
we proceed by adjusting the radii of the balls to get exactly the regularity
we want.

Fix $d_1 < 1$ so that $d_1^t = M^{-1}$.
Inequalities \eqref{eq:M} and \eqref{eq:d} combine to give
\begin{equation}\label{addlabel}
d_1^t\le \frac{2d^t}{2-d^t}\le\frac 43 d^t<2d^t
\end{equation}
which implies that the balls $B_i = B(x_i, 2d_1R)$, $i = 1, \dots, M$
are disjoint.
Next we repeat the process by taking $B(x_i, d_1R)$ as $B(z,R)$.
In this manner we get disjoint balls
$B(x_{ij}, 2^{1+1/t}dd_1R)$, $j=1,\dots,m_{2i}$, where
$x_{ij}\in B(x_i, d_1R)$, $x_{i1}=x_i$ and
\[
\frac{c_1}{d^s}\le m_{2i}.
\]
For all $i=1, \dots, M$ choose $M$ balls from these collections and
adjust the radii to be $d_1^2R$. Now the balls $B_{ij} = B(x_{ij},2d_1^2R)$
are disjoint and $B_{ij}\subset B_i$ for all $i,j=1,\dots,M$, because
by \eqref{addlabel}
\[
d_1R+2d_1^2R < d_1R+2^{1+1/t}d_1dR = d_1R(1+2^{1+1/t}d)
<2d_1R.
\]

We continue this process. At step $k$ we obtain for all sequences
$(i_1,\dots,i_{k-1})\in\{1, \dots, M\}^{k-1}$ disjoint balls
$B_{i_1\dots i_k}$, $i_k=1,\dots,M$, with centres in the ball
$B(x_{i_1\dots i_{k-1}}, d_1^{k-1}R)$ and with radii $2d_1^kR$ such that
\[
B_{i_1\dots i_k}\subset B_{i_1\dots i_{k-1}}.
\]
Furthermore, $x_{i_1\dots i_{k-1}1}=x_{i_1\dots i_{k-1}}$.
Set
\begin{equation}\label{eq:measure}
\tilde\nu(i_1\dots i_k) = M^{-k}
\end{equation}
for all $k\in\mathbb N$ and $(i_1,\dots,i_k)\in\{1,\dots,M\}^k$. Since $X$ is
$s$-regular, it follows that $B(z,2R)$ is totally bounded and hence also
compact by the completeness of $X$. Thus all closed balls in $B(z,2R)$ are
compact and the set
\begin{equation*}
  E = \bigcap_{k=1}^\infty \bigcup_{i_1,\ldots,i_k} B_{i_1\dots i_k}
  \subset B(z,2R)
\end{equation*}
is nonempty and compact. By the disjointness of the balls $B_{i_1\dots i_k}$,
we may identify the sequence $(i_1,\dots,i_k)$ with $E\cap B_{i_1\dots i_k}$. 
It follows from \eqref{eq:measure} that $\tilde\nu$ is a probability measure 
on the algebra generated by the sets $E\cap B_{i_1\dots i_k}$. Hence, by 
the Carath\'eodory-Hahn extension theorem \cite[Theorem 11.19]{WZ}, 
$\tilde\nu$ extends to a Borel probability measure on $E$.

Next we prove that $\tilde\nu$ is $t$-regular. Take any $x\in\spt(\tilde\nu)$
and $0<r<(1-2d_1)R$. Let $l\in \mathbb{N}$ be such that
\begin{equation}\label{eq:radii}
(1-2d_1)d_1^{l+1}R\le r <(1-2d_1)d_1^lR.
\end{equation}
By \eqref{addlabel} and \eqref{eq:d} we have $d_1<\frac 1{10}$. Thus
inequalities \eqref{eq:radii} guarantee that
\[
B_{i_1\dots i_{l+2}} \subset B(x,r)\subset B_{i_1\dots i_l}
\]
for some $(i_1,\dots,i_{l+2})\in\{1,\dots,M\}^{l+2}$.
This in turn implies with \eqref{eq:measure} and \eqref{eq:radii} that
\[
\frac{d_1^{2t}}{(1-2d_1)^tR^t}r^t\le d_1^{(l+2)t}=M^{-(l+2)}
\le\tilde\nu(B(x,r))\le M^{-l}=d_1^{lt}\le \frac{1}{(1-2d_1)^tR^td_1^t}r^t.
\]
Finally, defining $\nu=R^t\tilde\nu$ gives the desired measure.
\end{proof}

As an immediate consequence we obtain:

\begin{corollary}\label{corollary:regular_subsets}
Assume that $X$ is complete and $\mu$ is $s$-regular on $X$. Then for all
$0<t<s$ there is $F\subset X$ which is $t$-regular.
\end{corollary}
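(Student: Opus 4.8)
The plan is to derive Corollary~\ref{corollary:regular_subsets} directly from Lemma~\ref{lemma:regular_measures} by producing a single measure which is $t$-regular on the whole of its support. First I would fix any base point $z\in X$ and any radius $0<R<r_\mu$ (both exist since $X$ is nonempty and $r_\mu>0$), and apply Lemma~\ref{lemma:regular_measures} to obtain a measure $\nu$ with $\spt(\nu)\subset B(z,2R)$, $\nu(B(z,2R))=R^t$, and
\[
a_\nu r^t\le \nu(B(x,r))\le b_\nu r^t
\]
for all $x\in\spt(\nu)$ and $0<r<R$, with $a_\nu,b_\nu$ depending only on $s,t,a_\mu,b_\mu$.

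Next I would set $F=\spt(\nu)$. The two-sided bound above is exactly the condition \eqref{sreg} defining $s$-regularity of $\nu$ on $F$, but with exponent $t$ in place of $s$ and with the constant $r_\nu:=R$ playing the role of $r_\mu$; so $\nu$ is $t$-regular on $F$ in the sense of the definition given in Section~3. It remains only to observe that $\nu(X\setminus F)=\nu(X\setminus\spt(\nu))=0$ by the very definition of the support of a (Borel regular) measure. Hence $F$ meets the definition of a $t$-regular set: there is a measure ($\nu$) which is $t$-regular on $F$ and vanishes off $F$. This proves the corollary.

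There is essentially no obstacle here — the work was all done in Lemma~\ref{lemma:regular_measures}, and the corollary is a bookkeeping step matching the conclusion of that lemma against the definition of a $t$-regular set. The only mild points to note are that the completeness of $X$ is used (via the lemma) to guarantee that the Cantor-type set carrying $\nu$ is nonempty and compact, and that $F=\spt(\nu)$ is automatically closed, so it is a legitimate candidate. If one wishes, one may further remark that $F\subset B(z,2R)$, so the resulting $t$-regular set can be taken to be bounded; but this is not required by the statement.
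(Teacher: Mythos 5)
Your proposal is correct and coincides with the paper's intended argument: the paper states the corollary as an immediate consequence of Lemma \ref{lemma:regular_measures}, and taking $F=\spt(\nu)$ for the measure $\nu$ produced by that lemma (noting $\nu(X\setminus\spt(\nu))=0$ by separability) is exactly the bookkeeping step meant. Nothing further is needed.
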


Now we are ready to state the main theorem of this section.

\begin{theorem}\label{treghuok}
Suppose that $\mu$ is $s$-regular on $X$. If $0<t<s$ and $A \subset X$ is
$t$-regular, then $A$ is uniformly $\roo$-porous for some $\roo>0$. Conversely,
if $X$ is complete and $A\subset X$ is uniformly $\varrho$-porous, then for all
$s-c\varrho^s<t<s$ there exists a $t$-regular set $F\subset X$ so that 
$A\subset F$. Here $c$ is as in Corollary \ref{localsausagereg}.
\end{theorem}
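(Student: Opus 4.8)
The plan is to prove the two implications separately, handling the easier direction first.

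\smallskip

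\noindent\textbf{The regular $\Rightarrow$ porous direction.}
Suppose $A$ is $t$-regular with $t<s$, witnessed by a measure $\nu$ with $\nu(X\setminus A)=0$ satisfying \eqref{sreg} with exponent $t$ and constants $a_\nu,b_\nu,r_\nu$. First I would fix $x\in A$ and a small radius $r<\min\{r_\mu,r_\nu\}$, and compare the two measures $\mu$ (which is $s$-regular) and $\nu$ (which is $t$-regular) on the ball $B(x,r)$: since $t<s$, a ball $B(x,r)$ has $\mu$-measure $\asymp r^s$ but can only be covered by $\asymp (r/\rho r)^t$ balls of radius $\rho r$ each of $\nu$-measure $\asymp(\rho r)^t$, so $\nu(B(x,r))\asymp r^t\gg$ what a $\rho$-fraction of it would be if $A$ were spread everywhere. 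Concretely, the idea is: cover $B(x,r)\cap A$ by balls $B(y_i,\rho r)$ with $y_i\in A$; the number $k$ of balls needed satisfies $k(\rho r)^t\le C\, r^t$ by $t$-regularity of $\nu$, i.e. $k\le C\rho^{-t}$; but the number of \emph{disjoint} balls $B(z_j,\rho r/2)$ that fit inside $B(x,r)$ with centres in $X$ is $\gtrsim \rho^{-s}$ by $s$-regularity of $\mu$ (a packing estimate using Lemma~\ref{thm:harjoitustehtava}); since $\rho^{-s}\gg \rho^{-t}$ once $\rho$ is small enough depending only on $s-t,a_\mu,b_\mu,a_\nu,b_\nu$, at least one such ball $B(z_j,\rho r/2)$ meets no $B(y_i,\rho r)$, hence misses $A$, and lies in $B(x,r)$. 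This gives $\por(A,x,r)\ge \rho/2$ for all small $r$, i.e. uniform $\tfrac\rho2$-porosity. One must be slightly careful to use $*$-porosity versus porosity and absorb the factor-two loss via Remark~\ref{introrems}.(2), and to choose $\rho$ uniformly over all $x\in A$, which is fine since all constants are uniform.

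\smallskip

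\noindent\textbf{The porous $\Rightarrow$ regular direction.}
Now suppose $X$ is complete and $A$ is uniformly $\rho$-porous, and fix $t$ with $s-c\rho^s<t<s$, where $c$ is the constant of Corollary~\ref{localsausagereg}, so that $\delta:=c\rho^s$ satisfies $s-\delta<t<s$. The plan is to localize: by uniform porosity there is $r_p>0$ with $\por(A,x,r)\ge\rho$ for all $x\in A$, $0<r<r_p$. Pick a countable cover of $A$ by balls $B(x_0,r_0)$ with $r_0<D_1\min\{r_p,r_\mu\}$ as in Corollary~\ref{localsausagereg}; it suffices to produce, for each such piece $A\cap B(x_0,r_0)$, a $t$-regular set containing it (then take a countable disjoint-ified union, possibly rescaling the measures, to get a single $t$-regular $F\supset A$ — the union of countably many $t$-regular sets is $t$-regular provided the pieces are separated or one passes to closures, which works since the closure of a uniformly porous set is uniformly porous with the same constant). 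So I reduce to: $A\subset B(x_0,r_0)$ uniformly $\rho$-porous, and I want a $t$-regular $F\supset A$.

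\smallskip

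For this local step, the key input is Corollary~\ref{localsausagereg}, which gives
\[
\mu\bigl((A\cap B(x_0,r_0))(r)\bigr)\le C_4\,\mu(B(x_0,r_0))\Bigl(\frac r{r_0}\Bigr)^\delta
\quad\text{for }0<r<r_0,
\]
with $\delta=c\rho^s$. Combined with $s$-regularity of $\mu$, this says $\mu(A(r))\lesssim r^{s-\delta}\le r^{s-\delta}$, and since $s-\delta<t$ this in particular bounds the $M^\lambda_\mu$-content of $A$ at exponent $\lambda$ for all $\lambda>s-\delta$, hence (Lemma~\ref{thm:content}) gives $\dimm(A\cap B(x_0,r_0))\le s-\delta<t$. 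The construction of the $t$-regular superset $F$ then mimics Lemma~\ref{lemma:regular_measures}: one builds a Cantor-type net of disjoint balls of geometrically decreasing radii $d_1^k r_0$ with $M\approx d_1^{-t}$ children per ball, but now one \emph{forces every point of $A$ into the net} at each stage. This is possible precisely because, at scale $d_1^k r_0$, the set $A$ can be covered by at most $\approx (r_0/(d_1^k r_0))^{s-\delta}=d_1^{-k(s-\delta)}$ balls of that radius (by the content bound above), whereas the net has room for $M^k\approx d_1^{-kt}$ balls and $d_1^{-kt}\gg d_1^{-k(s-\delta)}$ since $t>s-\delta$ — so one can always include a ball around each needed covering-centre of $A$ among the $M$ children, and still have the required count. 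Taking $F=E$ to be the resulting compact limit set, the measure $\tilde\nu$ defined by $\tilde\nu(\text{node at level }k)=M^{-k}$ extends (Carathéodory–Hahn) to a $t$-regular measure on $F$ by the same estimate as in Lemma~\ref{lemma:regular_measures}, and $A\subset F$ by construction.

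\smallskip

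\noindent\textbf{Main obstacle.}
The delicate point is the bookkeeping in the forced-net construction: one must verify that at every stage and inside every node-ball the number of ``mandatory'' sub-balls coming from a covering of $A$ truly does not exceed the budget $M$ of children, \emph{uniformly} in the node and the level, using only the content bound $\dimm(A)\le s-\delta<t$; this requires choosing the contraction ratio $d_1$ (equivalently $M$) carefully so that the slack $d_1^{-k(t-(s-\delta))}\to\infty$ beats all the multiplicative constants ($C_4$, the doubling constants, the $5r$-covering overhead) at once, and one has to handle the first few levels separately since the content bound is only asymptotic. The rest — disjointness of the balls, the $t$-regularity estimate for $\tilde\nu$, compactness and completeness, and reassembling the countably many local pieces into one $F$ — is routine and parallels Lemma~\ref{lemma:regular_measures} verbatim.
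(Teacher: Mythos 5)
Your proof of the implication ``$t$-regular $\Rightarrow$ uniformly porous'' is essentially the paper's own argument: a packing count coming from the $s$-regularity of $\mu$ beats the covering count coming from the $t$-regularity of $\nu$ once the scale ratio is a suitable power of $\tfrac{a_\mu a_\nu}{b_\mu b_\nu}$, which produces a ball inside $B(x,r)$ missing $A$; apart from organizing the scales ($\varrho r$ directly instead of dyadic $2^{-k}r$), this is the same proof, and the technicalities you flag ($\por$ versus $\por^*$, keeping the hole inside $B(x,r)$) are handled the same way.

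The converse is where there is a genuine gap. Your plan is to rerun the Cantor construction of Lemma \ref{lemma:regular_measures} while ``forcing every point of $A$ into the net''. But the nodes of that construction are \emph{pairwise disjoint} closed balls, and this disjointness is precisely what makes $\tilde\nu$ well defined on the tree and what yields the upper bound $\tilde\nu(B(x,r))\le M^{-l}$ (a small ball meets at most one node per level). You cannot, in general, cover $A\cap B$ by pairwise disjoint closed balls of a fixed radius --- this is already impossible when $A$ is connected (say a porous arc) --- so ``including a ball around each needed covering-centre of $A$ among the $M$ children'' is not available; and if you allow the mandatory children to overlap, both the measure extension and the upper regularity estimate of the lemma collapse. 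A second problem is that the count you verify is global ($d_1^{-k(s-\delta)}$ mandatory balls at level $k$ against $M^k$ slots), whereas the budget is \emph{per node} ($M$ children each); the mandatory balls could concentrate in a single node, so one needs the local bound (at most $Cd_1^{\delta-s}\le M$ covering balls of $A$ inside each node ball), which does follow from Corollary \ref{localsausagereg} applied inside that node but must be invoked there. Finally, reassembling countably many local $t$-regular supersets into one $F$ is not routine: a countable union of $t$-regular sets need not be $t$-regular, and your pieces overlap along $A$, so the summed measure can violate the upper bound without a bounded-overlap argument.

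The paper's proof avoids all of this by a different device: it never tries to capture $A$ in a net. For every scale $\gamma^{n_0+j}$ and every ball of a maximal packing centred on $A$, porosity gives a hole disjoint from $A$, and inside the hole one plants, via Lemma \ref{lemma:regular_measures}, a $t$-regular measure of mass $\gamma^{(n_0+j+1)t}$; then $F=A\cup\bigcup_{j,i}\spt(\nu_{ji})$ and $\nu=\sum_{j,i}\nu_{ji}$. The lower bound at points of $A$ comes from the hole at the comparable scale, and the upper bound at points of $A$ is exactly where Corollary \ref{localsausagereg} (hence the hypothesis $t>s-c\varrho^s$) enters: it shows that only about $\gamma^{(k-j)(s-\delta)}$ holes of scale $j\ge k$ can meet $B(x,r)$, and the resulting geometric series converges because $t>s-\delta$. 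If you wish to salvage your route, you would need a nested partition structure (generalized dyadic cubes in a doubling space) in place of disjoint balls; that can be made to work, but it is a different construction from the one you describe and still requires the per-node counting and the global assembly to be done carefully.
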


\begin{proof}
The first part is proven in \cite[Lemma 3.12]{BHR}. We reprove it here to
obtain a quantitative estimate which in a sense is optimal (see Remark
\ref{finalremark}).
Let $\nu$ be a $t$-regular measure on $A$. Pick $x \in A$ and
$0<r<\frac12\min\{r_\mu,r_\nu\}$, and take $k \in\N$. For each $k \in \N$, we 
choose a maximal countable collection $\{ B(x_i,2^{-k-1}r) \}_{i \in I_k}$ of 
mutually disjoint balls centred at $B(x,r)$. It follows that the balls 
$B(x_i,2^{-k}r)$ cover the set $B(x,r)$.

From the $s$-regularity of $\mu$ we get
\begin{equation*}
a_\mu r^s \le \mu\bigl( B(x,r) \bigr) \le \sum_{i \in I_k}
\mu\bigl( B(x_i, 2^{-k}r) \bigr) \le \# I_k b_\mu 2^{-ks} r^s,
\end{equation*}
and therefore,
\begin{equation} \label{eq:lower_bound_for_I}
\# I_k \ge \frac{a_\mu}{b_\mu}2^{ks}.
\end{equation}
Taking any set $J_k \subset I_k$ for which
$B(x_j,2^{-k-2}r) \cap A \ne \emptyset$ as $j \in J_k$ and
using the fact that $\nu$ is $t$-regular on $A$ and $r<r_\nu$,
we have
\begin{align*}
b_\nu 2^t r^t &\ge \nu\bigl( B(x,2r) \bigr) \ge \sum_{j \in J_k}
\nu\bigl( B(x_j,2^{-k-1}r) \bigr) \\
&\ge \sum_{j \in J_k} \nu\bigl( B(z_j,2^{-k-2}r) \bigr)
\ge \# J_k a_\nu 2^{-(k+2)t} r^t,
\end{align*}
where $z_j \in B(x_j,2^{-k-2}r) \cap A$ as $j \in J_k$. Thus
\begin{equation*}
\# J_k \le \frac{b_\nu}{a_\nu}2^{(k+3)t}.
\end{equation*}
This upper bound is strictly smaller than the lower bound in
\eqref{eq:lower_bound_for_I} when
\begin{equation*}
k>\frac{\log\bigl(\frac{b_\mu b_\nu}{a_\mu a_\nu}2^{3t}\bigr)}{(s-t)\log 2}
  =:K(\mu,\nu).
\end{equation*}
Choosing $k > K(\mu,\nu)$, gives $I_k\setminus J_k\ne\emptyset$, and we find
$i_0 \in I_k$ such that $B(x_{i_0},2^{-k-2}r) \cap A = \emptyset$.
It follows that
\begin{equation}\label{lowporoest}
\por^*(A,x,2r) \ge 2^{-k-3}
\end{equation}
whenever $x\in A$ and $0<r<\frac12\min\{r_\mu,r_\nu\}$. The claim follows from
Remark \ref{introrems}.(2).

Now we prove the opposite direction.
The idea is to use Lemma \ref{lemma:regular_measures} to build
a regular measure inside the voids of suitable reference balls.
Let $0<\roo'<\roo$ and $r_p>0$ be such that
\begin{equation}\label{poroalpha}
\por(A,x,r)>\varrho'
\end{equation}
for all $x\in A$ and $0<r<r_p$. Set $\gamma =\frac{\varrho'}{5}$ and fix 
$n_0\in \mathbb{N}$ such that
$\gamma^{n_0} <D_1\min\{r_p,r_\mu\}$ (see Corollary \ref{localsausagereg}).
From Corollary \ref{localsausagereg} we see that for all $x\in X$ and
$0<r_0\le\gamma^{n_0}$
\begin{equation}\label{eq:enkeksilabelia}
\mu((A\cap B(x,r_0))(r))\le C_4\mu(B(x,r_0))\big(\frac r{r_0}\big)^\delta
\text{ for all }0<r<r_0
\end{equation}
where $\delta=c(\varrho')^s$.

Consider $s-\delta<t<s$. For all $j\in\mathbb N$, we choose a maximal
collection of disjoint balls
$\{B(x_{ji},\gamma^{n_0+j})\}$ so that $x_{ji}\in A$
for all $i\in\mathbb N$. Now we clearly have
\begin{equation} \label{Asubset}
A \subset\bigcup_i B(x_{ji},2\gamma^{n_0+j}).
\end{equation}
For each $i\in\mathbb N$ we find, using inequality \eqref{poroalpha} and the
fact $r<r_p$, points $z_{ji}\in B(x_{ji},\gamma^{n_0+j})$ so that
$B(z_{ji},\varrho'\gamma^{n_0+j})\subset B(x_{ji},\gamma^{n_0+j})\setminus A$.
Define
\[
B_{ji}=B(z_{ji},\tfrac{\varrho'}5\gamma^{n_0+j})=B(z_{ji},\gamma^{n_0+j+1})
\text{ and }2B_{ji}=B(z_{ji},2\gamma^{n_0+j+1}).
\]
Then obviously
$2B_{ji}\subset B(z_{ji},\varrho'\gamma^{n_0+j})
  \subset B(x_{ji},\gamma^{n_0+j})\setminus A$.
Lemma \ref{lemma:regular_measures} implies that for all
$i\in \mathbb N$ there is
a $t$-regular measure $\nu_{ji}$ on $\spt(\nu_{ji})\subset 2B_{ji}$ with
$\nu_{ji}(2B_{ji})=\gamma^{(n_0+j+1)t}$.
Note that the constants $a_{\nu_{ji}}$ and $b_{\nu_{ji}}$ are the same for all
$j$ and $i$, say $a_{\nu_{ji}}=a$ and $b_{\nu_{ji}}=b$. Moreover, it is evident
by the properties of $\nu_{ji}$  that we may choose
$r_{\nu_{ji}}=3\gamma^{n_0+j}$ by adjusting $a$ and $b$. We conclude that
for all $x\in\spt(\nu_{ji})$ and $0<r<3\gamma^{n_0+j}$
\begin{equation}\label{nuijregular}
ar^t\le\nu_{ji}(B(x,r))\le br^t.
\end{equation}

Setting
\[
F=\bigcup_{j,i}\spt(\nu_{ji})\cup A
\]
and
\[
\nu=\sum_{j,i} \nu_{ji},
\]
we clearly have $A\subset F$ and $\nu(X\setminus F)=0$,
and therefore, it suffices to
prove that $\nu$ is $t$-regular on $F$.

We first verify that $\nu$ is $t$-regular on $A$ by showing that there are
constants
$C_1$ and $C_2$ such that for all $x\in A$ and $0<r<\frac12\gamma^{n_0+1}$
we have
\begin{equation}\label{goalA}
C_1r^t\le\nu(B(x,r))\le C_2r^t.
\end{equation}
For the purpose of proving \eqref{goalA}, fix $k\in\mathbb N$ so that
\[
\gamma^{n_0+k+1}\le r < \gamma^{n_0+k}.
\]
From \eqref{Asubset} it follows easily that $2B_{k+3,i}\subset B(x,r)$ for some
$i$ giving
\begin{equation*}
\nu(B(x,r))\ge\nu_{k+3,i}(2B_{k+3,i})=\gamma^{(n_0+k+4)t}\ge\gamma^{4t}r^t.
\end{equation*}
Thus we may choose $C_1=\gamma^{4t}$ in \eqref{goalA}.

For the remaining inequality in \eqref{goalA}, denote by $N_j$ the number of
balls $2B_{ji}$ that intersect $B(x,r)$. Assuming that $j\ge k$, we obtain
\begin{equation}\label{laajennus}
\bigcup_{2B_{ji}\cap B(x,r)\ne\emptyset}2B_{ji}
\subset (A\cap B(x,\frac2{\gamma}r))(\gamma^{n_0+j}).
\end{equation}
Indeed, if $2B_{ji}\cap B(x,r)\ne\emptyset$, then also
$B(x_{ji},\gamma^{n_0+j})\cap B(x,r)\ne\emptyset$ giving
$d(x_{ji},x)\le r+\gamma^{n_0+j}\le r+\frac 1{\gamma}r\le\frac 2{\gamma}r$.
Hence \eqref{laajennus} is valid since $x_{ji}\in A$.
Now \eqref{laajennus} implies with \eqref{eq:enkeksilabelia} that
\begin{align*}
N_j a_\mu 2^s\gamma^{(n_0+j+1)s}
&\le\mu\bigl(\bigcup_{2B_{ji}\cap B(x,r)\ne\emptyset}2B_{ji}\bigr)
  \le \mu((A\cap B(x,\frac2{\gamma}r))(\gamma^{n_0 + j}))\\
&\le C_4\mu(B(x,\frac2{\gamma} r))
  \big(\frac{\gamma^{n_0 + j}}{\frac2{\gamma}r}\big)^{\delta}
  \le C_4 b_\mu\big(\frac2{\gamma}r\big)^{s-\delta}\gamma^{(n_0+j)\delta}\\
&\le  C_4 b_\mu\big(\frac2{\gamma}\big)^{s-\delta}
\gamma^{(n_0+k)s}\gamma^{(j-k)\delta},
\end{align*}
and therefore,
\begin{equation}\label{Nestimate}
N_j \le c_3\gamma^{(k-j)(s-\delta)}\text{ for all }j\ge k,
\end{equation}
where $c_3$ is a constant independent of $k$ and $j$.
Note that $N_j=0$ provided that $j\le k-1$. This is true
because
\[
\dist(2B_{ji},A)\ge 3\gamma^{n_0+j+1}\ge 3\gamma^{n_0+k}>r.
\]
From \eqref{Nestimate} we obtain
\begin{equation*}
\begin{split}
\nu(B(x,r))&=\sum_{j,i}\nu_{ji}(B(x,r)\cap 2B_{ji})
\le\sum_{j=k}^\infty N_j \gamma^{(n_0+j+1)t}\\
&\le c_3\gamma^{t(n_0+1)+k(s-\delta)}
\sum_{j=k}^\infty \gamma^{j(\delta+t-s)}\\
&=\frac{c_3}{1-\gamma^{\delta+t-s}}\gamma^{t(n_0+k+1)}
\le C_2r^t
\end{split}
\end{equation*}
where $C_2=\frac{c_3}{1-\gamma^{\delta+t-s}}$. This completes the verification
of \eqref{goalA}.

We finish the proof by showing that $\nu$ is $t$-regular on $\spt(\nu_{ji})$
for all $j$ and $i$. Fix $j$ and $i$ and let
$y\in\spt(\nu_{ji})$. We first derive the
lower bound in the definition of $t$-regularity.
If $0<r\leq 3\gamma^{n_0+j}$, then
$\nu(B(y,r))\geq\nu_{ji}(B(y,r))\geq a r^t$
by \eqref{nuijregular}. On the other hand, if
$3\gamma^{n_0+j}< r<\gamma^{n_0+1}$, we get
$B(x_{ji},\frac r2)\subset B(y,r)$. Applying \eqref{goalA} implies that
$\nu(B(y,r))\geq\nu(B(x_{ji},\frac r2))\geq C_12^{-t}r^t$.

For the upper regularity bound we first assume that
$0<r\leq\gamma^{n_0+j+1}$. Now $B(y,r)\cap 2B_{j'i'}=\emptyset$ for all
$(j',i')\neq (j,i)$. Indeed, if $j=j'$ this follows from
$B(y,r)\subset B(x_{ji},\gamma^{n_0+j})$ and
$2B_{ji'}\subset B(x_{ji'},\gamma^{n_0+j})$. In the case $j'<j$ assume that
$w\in B(y,r)$. Then $d(w,A)\le d(w,x_{ji})\le 2\gamma^{n_0+j}$. Further, if
$w\in 2B_{j'i'}$, then
$d(w,A)\ge\frac{3\roo'}5\gamma^{n_0+j'}\ge 3\gamma^{n_0+j}$.
The final case $j'>j$ is similar. Thus
$\nu(B(y,r))=\nu_{ji}(B(y,r))\leq b r^t$ by \eqref{nuijregular}.
Finally, supposing that
$\gamma^{n_0+j+1}<r<\tfrac{\varrho'}{12}\gamma^{n_0+1}$ gives
$B(y,r)\subset B(x_{ji},\frac{6r}{\varrho'})$.
Hence $\nu(B(y,r))\leq\nu(B(x_{ji},\tfrac{6r}{\varrho'}))
\leq C_2\big(\tfrac6{\varrho'}\big)^tr^t$
by \eqref{goalA}.
\end{proof}

\begin{remark}\label{finalremark}
(1) Observe that in \eqref{lowporoest} the porosity $\roo$ is proportional
to $\bigl(\frac{a_\nu}{b_\nu}\bigr)^{\frac 1{s-t}}$. The following simple
construction shows that this is sharp. Let $0<t<1$ and choose from $[0,1]$
$N$ evenly distributed intervals of length $N^{-\frac 1t}$. Repeat this
construction and let $\nu$ be the natural measure on the resulting Cantor
set $A$. Then $A$ is $t$-regular, $\por(A)\approx\frac{1-N^{1-\frac 1t}}N$
and $\frac{a_\nu}{b_\nu}=\frac{N^{t-1}}{(1-N^{1-\frac 1t})^t}$. As $N$ tends to
infinity,
$\por(A)\approx\frac 1N\approx\bigl(\frac{a_\nu}{b_\nu}\bigr)^{\frac 1{1-t}}$.
We do not know what is the best asymptotic behaviour as $\frac{a_\nu}{b_\nu}$
is fixed and $t\to s$.

(2) We do not know whether the completeness is needed in Theorem
\ref{treghuok}. Our method does not work without the completeness since in 
that case the set $E$ constructed in the proof of Lemma 
\ref{lemma:regular_measures} may be empty.
\end{remark}

\end{document}